\documentclass[a4paper]{article}

\usepackage[T1]{fontenc}
\usepackage{authblk}
\usepackage[utf8]{inputenc}
\usepackage{amsmath}
\usepackage{amsthm}
\usepackage{amsfonts}
\usepackage{graphicx}
\usepackage{makeidx}
\usepackage{subfigure}
\usepackage{amssymb}
\usepackage{setspace}
\usepackage[all,cmtip]{xy}
\usepackage{fancyhdr}
\usepackage{tikz}
\usepackage{tikz-cd}
\usepackage[T1]{fontenc}
\usepackage{hyperref}
\usepackage{mathtools}
\usepackage{fancyhdr}
\usepackage{pdfpages}
\usepackage{graphicx}
\usepackage{caption}
\usepackage{todonotes}

\newtheorem{Thm}{Theorem}[section]
\newtheorem{Lem}[Thm]{Lemma}
\newtheorem{Prop}[Thm]{Proposition}
\newtheorem{Cor}[Thm]{Corollary}

\newtheorem{Def}[Thm]{Definition}

\makeatletter
\def\moverlay{\mathpalette\mov@rlay}
\def\mov@rlay#1#2{\leavevmode\vtop{%
   \baselineskip\z@skip \lineskiplimit-\maxdimen
   \ialign{\hfil$\m@th#1##$\hfil\cr#2\crcr}}}
\newcommand{\charfusion}[3][\mathord]{
    #1{\ifx#1\mathop\vphantom{#2}\fi
        \mathpalette\mov@rlay{#2\cr#3}
      }
    \ifx#1\mathop\expandafter\displaylimits\fi}
\makeatother

\newcommand{\Pfin}{\mathfrak P_{\operatorname{fin}}}
\newcommand{\Pboole}{\mathfrak P_{\operatorname{Boole}}}
\newcommand{\bA}{{\mathbb A}}
\DeclareMathOperator{\Pol}{Pol}
\DeclareMathOperator{\ERP}{ERP}
\DeclareMathOperator{\HSP}{HSP}

\title{Two-element Structures Modulo Primitive Positive Constructability}

\author[1]{Manuel Bodirsky}
\affil[1]{\emph{Institute of Algebra, Technische Universit\"at Dresden, Dresden, Germany.} manuel.bodirsky@tu-dresden.de}

\author[2]{Albert Vucaj\thanks{Both authors have received funding from the European Research Council (ERC Grant
Agreement no. 681988, CSP-Infinity).}}
\affil[2]{\emph{Institute of Algebra, Technische Universit\"at Dresden, Dresden, Germany}
albert.vucaj@tu-dresden.de}

\date{}

\begin{document}
\maketitle

\begin{abstract}
Primitive positive constructions have been introduced in recent work of Barto, Opr\v{s}al, and Pinsker to study the computational complexity of constraint satisfaction problems. Let $\Pfin$ be the poset
which arises from ordering all finite relational structures by pp-constructability. This poset is infinite, but we do not know whether it is uncountable. 
In this article, we give a complete description of the restriction $\Pboole$ of $\Pfin$ to relational structures on a two-element set. We use $\Pboole$ to present the various complexity regimes of Boolean constraint satisfaction problems that were described by Allender, Bauland, Immerman, Schnoor and Vollmer. 
\end{abstract}

\section{Introduction}
Varieties play a central role in universal algebra. In 1974, Neumann~\cite{Neumann} defined the notion of interpretability between varieties, which has been studied intensively, e.g., by Garcia and Taylor~\cite{GarciaTaylor}. The corresponding lattice basically corresponds to the homomorphism order of clones.

Recently, Barto, Opr\v{s}al, and Pinsker~\cite{wonderland} introduced \emph{minor-preserving maps}, a weakening of the notion of a clone homomorphism. 
We study the poset that arises from ordering clones on a finite domain with respect to the existence of minor-preserving maps. It can be characterised in three very different, but equivalent, ways. One of the characterisations is in terms of
\emph{primitive positive constructions} for relational structures. Primitive positive constructions are also motivated by the study of the computational complexity of constraint satisfaction problems (CSPs). 
They preserve the complexity of the CSPs in the following sense: if $\bA$ and $\mathbb{B}$ are finite structures such that $\bA$ pp-constructs $\mathbb{B}$ then $\operatorname{CSP}(\mathbb{B})$  has a polynomial-time reduction to $\operatorname{CSP}(\bA)$. Barto, Opr\v{s}al, and Pinsker~\cite{wonderland} proved that $\bA$ pp-constructs $\mathbb{B}$ if and only if it exists a minor-preserving map from $\Pol({\bA})$ to $\Pol({\mathbb B})$ (Theorem~\ref{Barto}). 

Let $\Pfin$ be the poset
which arises from ordering all finite relational structures by pp-constructability. It follows from Bulatov's universal-algebraic proof~\cite{BulatovHColoring} of the $H$-coloring dichotomy theorem of Hell and Ne\v{s}et\v{r}il~\cite{HellNesetril}
that the poset $\Pfin$, restricted to all finite undirected graphs, just has three elements: ${\mathbb K}_3$ (the clique with three vertices), ${\mathbb K}_2$ (the graph consisting of a single edge),
and the graph with one vertex and a loop. 
On the other hand, the cardinality of 
$\Pfin$ is not known; it is clear that it has infinite descending chains (already for two-element structures), but it is not known whether it is uncountable.

In this article we study the restriction of $\Pfin$
to all two-element structures. 
We call this subposet $\Pboole$; it turns out that
it is a countably infinite lattice. 
We provide a description of $\Pboole$ in Theorem~\ref{mainThm}; in particular, we show that it has 3 atoms, one coatom, infinite descending chains, and a planar Hasse diagram. 
Our poset $\Pboole$ can be used to formulate a refinement of Schaefer's theorem~\cite{Schaefer} that matches the known results about the complexity of Boolean constraint satisfaction problems~\cite{AllenderSchaefer,Polymorphisms}.

\section{The PP-Constructability Poset}\label{ppposet}
As already anticipated in the introduction, 
$\Pfin$ can be defined in three different ways. In this section we introduce two of them. The third equivalent description  
relates the elements
of $\Pfin$ with classes of algebras closed not only under the classical operators H, S, and P, but also under so-called \emph{reflections}; but this will not be relevant for
the purposes of this article, so we refer the interested reader to~\cite{wonderland}.

\subsection{PP-Constructions}
Let $\tau$ be a relational signature. 
Two relational $\tau$-structures $\bA$ and $\mathbb B$ are \emph{homomorphically equivalent} if there exists a homomorphism from $\bA$ to $\mathbb B$ and vice-versa. A \emph{primitive positive formula (over $\tau$)}  is a first-order formula which only uses relation symbols in $\tau$, equality, conjunction and existential quantification. When $\bA$ is a $\tau$-structure and $\phi(x_1,\dots,x_n)$ is a $\tau$-formula with $n$ free-variables $x_1,\dots,x_n$ then 
$\{(a_1,\dots,a_n)\mid \bA \models \phi(a_1,\dots,a_n)\}$
is called the \emph{the relation defined by $\phi$}. 
If $\phi$ is primitive positive, then this relation is said to be \emph{pp-definable} in $\bA$. Given two relational structures $\bA$ and $\mathbb B$ on the same domain $A = B$ (but with possibly different signatures), we say that $\bA$ \emph{pp-defines} $\mathbb B$ if every relation in $\mathbb B$ is pp-definable in $\bA$. We say that $\mathbb B$ is a \emph{pp-power} of $\bA$ if it is isomorphic to a structure with domain $A^n$, where $n\geq 1$, whose relations are pp-definable from $\bA$ (a $k$-ary relation on $A^n$ is regarded as a $kn$-ary relation on $A$).

\begin{Def}[\cite{wonderland}]
Let $\bA$ and $\mathbb B$
be relational structures. We say that $\bA$ \emph{pp-constructs} $\mathbb B$, in symbols $\bA \preceq \mathbb B$, if $\mathbb B$ is homomorphically equivalent to a pp-power of $\bA$.
\end{Def}

The following result from~\cite{wonderland} asserts that pp-constructability preserves the complexity of $\operatorname{CSP}$s:

\begin{Prop}
Let $\bA$ and $\mathbb B$ be relational structures. If $\bA$ pp-constructs $\mathbb
B$ then $\operatorname{CSP}(\mathbb B)$ is log-space reducible to $\operatorname{CSP}(\bA)$.
\end{Prop}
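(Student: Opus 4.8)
The plan is to unwind the definition of pp-constructability into its two basic ingredients and to produce a log-space reduction along each of them. Since $\bA \preceq \mathbb B$ means that $\mathbb B$ is homomorphically equivalent to a pp-power $\bA'$ of $\bA$, and since the CSP of a structure depends only on its homomorphic equivalence class — an instance $\mathbb I$ admits a homomorphism to $\mathbb B$ if and only if it admits one to $\bA'$, by composing with the homomorphisms witnessing the equivalence — we have $\operatorname{CSP}(\mathbb B) = \operatorname{CSP}(\bA')$ literally as the same decision problem. Hence it suffices to give a log-space reduction from $\operatorname{CSP}(\bA')$ to $\operatorname{CSP}(\bA)$ in the case that $\bA'$ is a pp-power of $\bA$, say with domain $A^n$ and each relation $R^{\bA'}$ of arity $k$ regarded as a $kn$-ary relation on $A$ that is defined by a fixed primitive positive formula $\phi_R$ over the signature of $\bA$.

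Given an instance $\mathbb I$ of $\operatorname{CSP}(\bA')$ with variable set $V$ and a list of constraints, I would build an instance $\mathbb J$ of $\operatorname{CSP}(\bA)$ as follows. For every $v \in V$ introduce a block of $n$ fresh variables $v^1,\dots,v^n$. For every constraint $R(v_1,\dots,v_k)$ of $\mathbb I$, take a fresh copy of $\phi_R$, rename its free variables in order to $v_1^1,\dots,v_1^n,\dots,v_k^1,\dots,v_k^n$, introduce pairwise distinct fresh variables for its (finitely many) existentially quantified variables, and add to $\mathbb J$ all the atomic constraints occurring in this copy; equality atoms in $\phi_R$ are dealt with either by substituting one variable for the other or, equivalently, by keeping a binary equality relation, which is itself pp-definable. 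A routine back-and-forth argument then shows that $\mathbb J$ has a homomorphism to $\bA$ if and only if $\mathbb I$ has one to $\bA'$: a map $V \to A^n$ corresponds coordinatewise, over the blocks, to a partial assignment of the variables of $\mathbb J$, and the existential quantifiers of the $\phi_R$ supply exactly the freedom needed to extend such a partial assignment to the fresh variables.

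Finally I would check that $\mathbb I \mapsto \mathbb J$ is computable in logarithmic space. The formulas $\phi_R$ are fixed, depending only on $\bA'$ and not on $\mathbb I$, so the transducer reads $\mathbb I$ once and, for each variable and each constraint, writes a block of output of size bounded by a constant times $|\mathbb I|$ while maintaining only a constant number of counters (indices into $V$, into the constraint list, and into the naming scheme for fresh variables), each of which fits in $O(\log |\mathbb I|)$ bits. Composing this with the identity reduction coming from homomorphic equivalence, and using that log-space reductions compose (the standard argument of re-running the first transducer on demand to recompute each needed input bit of the second), yields the desired log-space reduction from $\operatorname{CSP}(\mathbb B)$ to $\operatorname{CSP}(\bA)$. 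I expect the only genuinely delicate points to be the correctness of the gadget replacement — in particular, keeping the fresh-variable bookkeeping consistent across constraints and handling the equality atoms correctly — together with the composition of log-space reductions; both are standard, but each deserves a careful sentence.
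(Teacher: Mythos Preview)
The paper does not actually prove this proposition; it is stated as a result imported from Barto, Opr\v{s}al, and Pinsker \cite{wonderland}, with no argument given. Your proposed proof is correct and is exactly the standard route one finds in that literature: split pp-constructability into homomorphic equivalence plus a pp-power, observe that homomorphic equivalence leaves the CSP literally unchanged, and for the pp-power step perform the gadget replacement you describe (blow each variable up into an $n$-tuple of variables and replace each constraint by a fresh copy of the defining primitive positive formula, with fresh names for the existentially quantified variables). The log-space bookkeeping and the composition of log-space reductions are both handled correctly. One small caveat worth making explicit: as stated, the proposition does not assume $\bA$ and $\mathbb B$ are finite, but your reduction --- and indeed the very notion of a log-space reduction between their CSPs --- tacitly relies on the template being finite, which is the standing hypothesis throughout the paper.
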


Since pp-constructability is a reflexive and transitive relation on the class of relational structures~\cite{wonderland}, the relation $\equiv$ defined
by
\begin{equation*}
\bA \equiv \mathbb B 
\; :\Leftrightarrow \; \mathbb B \preceq \bA \wedge \bA \preceq \mathbb B  
\end{equation*}
is an equivalence relation. The equivalence classes of $\equiv$ are called the \emph{pp-constructability types} and we write $\overline{\bA}$ 
for the pp-constructability type of $\bA$. For any two relational structures $\bA$ and $\mathbb B$ we write $\overline{\bA}\preceq \overline{\mathbb B}$ if and only if $\bA\preceq \mathbb B$. The poset 
\begin{equation*}
\Pfin\coloneqq (\{\overline{\bA}\mid \bA\text{ is a finite relational structure}\};\preceq)
\end{equation*}
is called the \emph{pp-constructability poset}.

This article is dedicated to the subposet $\Pboole$ of $\Pfin$, whose universe is the set of all pp-constructability types of relational structures on $\{0,1\}$.

\subsection{Minor-Preserving Maps}\label{h1section}
Another approach to the pp-constructability poset involves a weakening of the notion of \emph{clone homomorphism} and certain identities called \emph{height 1 identities}. 

\begin{Def}
Let $\tau$ be a functional signature. An identity is said to be a \emph{height 1 identity} if it is of the form
\begin{equation*}
f(x_{\pi(1)},\dots,x_{\pi(n)}) \approx g(x_{\sigma(1)},\dots,x_{\sigma(m)})
\end{equation*}
where $f,g$ are function symbols in $\tau$ and $\pi$ and $\sigma$ are mappings,  $\pi\colon\{1,\dots,n\} \rightarrow \{1,\dots,r\}$ and $\sigma\colon\{1,\dots,m\} \rightarrow \{1,\dots,r\}$.
\end{Def}
 
In other words, we require that there is exactly one occurrence of a function symbol on both sides of the equality. The use of nested terms is forbidden. Identities of the form $f(x_1,\dots,x_n)\approx y$ are forbidden as well (identities of this form are often called \emph{linear} or \emph{of height at most 1}). 

A \emph{height 1 condition} is a finite set $\Sigma$ of height 1 identities. We say that a set of functions $F$ (for instance a clone) \emph{satisfies} a height 1 condition $\Sigma$, and we write $F\models\Sigma$, if for each function symbol $f$ appearing in $\Sigma$, there exists a function $f^F\in F$ of the corresponding arity such that every identity in $\Sigma$ becomes a true statement when the symbols of $\Sigma$ are instantiated by their counterparts in $F$.

We introduce some height 1 conditions that we are going to use later. 

\begin{Def} Let $f$ be a $k$-ary operation symbol.
\begin{itemize}
    \item The set of height 1 identities
\begin{equation*}
f(x,\dots,x,y) \approx f(x,\dots,y,x) \approx \dots \approx f(y,x,\dots,x) \approx f(x,\dots,x)
\end{equation*}
is called \emph{quasi near-unanimity condition} $(\operatorname{QNU}(k))$.
\item The $\operatorname{QNU}(3)$ condition is also called \emph{quasi majority condition}.
\item The set of height 1 identities ($k=3$)
\begin{equation*}
f(x,y,y) \approx f(y,x,y) \approx f(y,y,x) \approx f(x,x,x)
\end{equation*}
is called \emph{quasi minority condition}.
\end{itemize}
\end{Def}
A $k$-ary function $f$ is a \emph{quasi near-unanimity operation} if $\{f\}$ satisfies the quasi near-unanimity condition $\operatorname{QNU}(k)$. A \emph{quasi majority} and a \emph{quasi minority} operation is defined in the same way.

Let $f$ be any $n$-ary operation. Let $\pi$ be a function from $\{1,\dots,n\}$ to $\{1,\dots,r\}$. We denote by $f_\pi$ the following $r$-ary operation 
\begin{equation*}
f_\pi(x_1,\dots,x_r) \coloneqq f(x_{\pi(1)},\dots,x_{\pi(n)}).
\end{equation*}

\begin{Def}[\cite{wonderland}]
Let $\mathcal A$ and $\mathcal B$ be clones and let $\alpha \colon \mathcal A \rightarrow \mathcal B$ be a mapping that preserves arities. We say that $\xi$ is a \emph{minor-preserving map} if
\begin{equation*}
\xi(f_\pi) = \xi(f)_\pi
\end{equation*}
for any $n$-ary operation $f \in \mathcal A$ and $\pi\colon \{1,\dots,n\} \rightarrow \{1,\dots,r\}$.
\end{Def}

We write $\mathcal A\preceq_m \mathcal B$ if there exists a minor-preserving map $\xi\colon \mathcal A \rightarrow \mathcal B$, and we denote by $\equiv_m$ the equivalence relation where
$\mathcal A \equiv_m \mathcal B$ if $\mathcal A \preceq_m \mathcal B$ and $\mathcal B \preceq_m\mathcal A$. Again, we denote by $\overline{\mathcal A}$ the $\equiv_m$-class of $\mathcal A$ and we write $\overline{\mathcal A}\preceq_m\overline{\mathcal B}$ if and only if $\mathcal A\preceq_m\mathcal B$.
The connection between pp-constructability and
minor-preserving maps is given by the following theorem. 

\begin{Thm}[\cite{wonderland}]\label{Barto}
Let $\bA$ and $\mathbb B$ be finite relational structures and let $\mathcal A$ and $\mathcal B$ be their polymorphism clones. Then the following are equivalent:
\begin{enumerate}
	\item $\bA\preceq\mathbb B$.
	\item $\mathcal A\preceq_m\mathcal B$.
	\item Every height 1 condition that holds in $\mathcal A$ also holds in $\mathcal B$. 
	\item $\mathcal B \in \operatorname{ERP} \mathcal A$. 
\end{enumerate}
\end{Thm}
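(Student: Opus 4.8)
The four statements separate into one substantial equivalence, $(1)\Leftrightarrow(2)$, and two soft reformulations, $(2)\Leftrightarrow(3)$ and $(2)\Leftrightarrow(4)$. The plan is to prove $(2)\Rightarrow(3)$, $(3)\Rightarrow(2)$, then $(1)\Rightarrow(2)$ and $(2)\Rightarrow(1)$, and finally $(2)\Leftrightarrow(4)$; the only genuinely delicate point is $(2)\Rightarrow(1)$. For $(2)\Rightarrow(3)$: if $\xi\colon\mathcal A\to\mathcal B$ is minor-preserving and a height 1 condition $\Sigma$ is witnessed in $\mathcal A$ by operations $f^{\mathcal A}$, then the operations $\xi(f^{\mathcal A})$ witness $\Sigma$ in $\mathcal B$, because each identity of $\Sigma$ equates two minors of single operations and $\xi$ commutes with forming minors. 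For $(3)\Rightarrow(2)$: since $\bA$ and $\mathbb B$ are finite, $\mathcal A^{(n)}$ and $\mathcal B^{(n)}$ are finite for every $n$; any finite subset of the ``full minor condition of $\mathcal A$'' (a symbol for each operation of $\mathcal A$ together with all height 1 identities valid among them) mentions operations of bounded arity and is hence implied by a single finite height 1 condition satisfied by $\mathcal A$, which by (3) is satisfied by $\mathcal B$; organising the partial solutions by arity into a finitely branching tree and applying K\"onig's Lemma (equivalently, compactness of a product of finite discrete spaces) produces a minor-preserving map $\mathcal A\to\mathcal B$.

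For $(1)\Rightarrow(2)$ I would prove two lemmas and use that minor-preserving maps compose. First, if $\mathbb C$ is a pp-power of $\bA$ with domain $A^n$, then sending a $k$-ary $f\in\Pol(\bA)$ to its coordinatewise action $f^{[n]}$ on $A^n$ gives an element of $\Pol(\mathbb C)$ (pp-definable relations are invariant under coordinatewise application of polymorphisms of $\bA$), and $f\mapsto f^{[n]}$ visibly commutes with minors. Second, if $\mathbb B_1$ and $\mathbb B_2$ are homomorphically equivalent via $h_1\colon\mathbb B_1\to\mathbb B_2$ and $h_2\colon\mathbb B_2\to\mathbb B_1$, then the reflection $g\mapsto h_1\circ g\circ(h_2,\dots,h_2)$ maps $\Pol(\mathbb B_1)$ into $\Pol(\mathbb B_2)$ (one checks it preserves relations, using that $h_1,h_2$ are homomorphisms) and is minor-preserving, so $\Pol(\mathbb B_1)\equiv_m\Pol(\mathbb B_2)$. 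Composing these, $\bA\preceq\mathbb B$ yields $\mathcal A\preceq_m\mathcal B$.

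The crux is $(2)\Rightarrow(1)$. Given a minor-preserving $\xi\colon\Pol(\bA)\to\Pol(\mathbb B)$, enumerate $B=\{b_1,\dots,b_n\}$ and let $\mathbb D$ be the structure with domain $\Pol^{(n)}(\bA)$ (a pp-definable subset of $A^{|A|^n}$) in which, for each relation $R$ of $\mathbb B$ of arity $k$, $R^{\mathbb D}$ is the smallest relation pp-definable from $\bA$ containing every tuple $(\pi^{(n)}_{i_1},\dots,\pi^{(n)}_{i_k})$ with $(b_{i_1},\dots,b_{i_k})\in R$. Then $b_i\mapsto\pi^{(n)}_i$ is a homomorphism $\mathbb B\to\mathbb D$ by construction. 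For the converse direction, by the Galois correspondence for finite structures $R^{\mathbb D}$ is the $\Pol(\bA)$-closure of its generators, so every element of $R^{\mathbb D}$ has the form $(g_{\sigma_1},\dots,g_{\sigma_k})$ for a single $g\in\Pol(\bA)$ with the maps $\sigma_p$ arising from generating tuples of $R$; minor-preservation gives $\xi(g_{\sigma_p})=\xi(g)_{\sigma_p}$, so $f\mapsto\xi(f)(b_1,\dots,b_n)$ sends such a tuple to $\xi(g)$ applied coordinatewise to tuples of $R$, which lies in $R$ since $\xi(g)\in\Pol(\mathbb B)$; hence this is a homomorphism $\mathbb D\to\mathbb B$. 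This last step is exactly where minor-preservation is indispensable. Finally $\mathbb D$ is homomorphically equivalent to the genuine pp-power $\mathbb D^{+}$ with domain $A^{|A|^n}$ carrying the same relations $R^{\mathbb D}$: each $R^{\mathbb D}$ lies entirely inside $\Pol^{(n)}(\bA)$ in every coordinate, so points outside this set occur in no relation of $\mathbb D^{+}$ and may be folded onto $\pi^{(n)}_1$. Thus $\mathbb B\equiv\mathbb D\equiv\mathbb D^{+}$ is a pp-power of $\bA$, i.e.\ $\bA\preceq\mathbb B$.

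For $(2)\Leftrightarrow(4)$ one recalls from \cite{wonderland} that $\ERP$ is the closure operator on classes of algebras generated by expansions, reflections, and finite products, and the translation is routine: each of the three operators preserves the property ``the algebra $(A;\Pol(\bA))$ admits a minor-preserving map onto it'' (finite products via the diagonal, reflections by composing with the reflection maps, expansions trivially), which gives $(4)\Rightarrow(2)$; conversely a minor-preserving map $\Pol(\bA)\to\Pol(\mathbb B)$ exhibits, via the construction above, the algebra $(B;\Pol(\mathbb B))$ as a reflection of a subalgebra of a finite power of $(A;\Pol(\bA))$, giving $(2)\Rightarrow(4)$. The main obstacle throughout is thus the reverse Galois direction $(2)\Rightarrow(1)$: turning an abstract minor-preserving map into a concrete pp-power and checking that the relations one is forced to add keep the evaluation map $f\mapsto\xi(f)(b_1,\dots,b_n)$ a homomorphism.
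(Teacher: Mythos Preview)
The paper does not prove this theorem at all: it is quoted from \cite{wonderland} and the paper explicitly writes, immediately after the statement, ``We refer to~\cite{wonderland} for the definitions involved in Item 4 of the statement; we just mention that $\ERP \mathcal A$ contains the universal-algebraic variety $\HSP \mathcal A$.'' So there is no in-paper proof to compare your attempt against.

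That said, your sketch is essentially the argument of Barto--Opr\v{s}al--Pinsker and is correct in outline. The implications $(2)\Rightarrow(3)$ and $(1)\Rightarrow(2)$ are exactly as you describe. Your compactness argument for $(3)\Rightarrow(2)$ is the standard one and works because on a finite domain each $\mathcal A^{(n)}$ and $\mathcal B^{(n)}$ is finite. Your construction for $(2)\Rightarrow(1)$ is the free-algebra construction from \cite{wonderland}: the domain $A^{|A|^n}$ with relations given by the $\Pol(\bA)$-closure of the projection tuples indexed by tuples of $R^{\mathbb B}$; your verification that $f\mapsto\xi(f)(b_1,\dots,b_n)$ is a homomorphism, using minor-preservation to rewrite $\xi(g_{\sigma_p})$ as $\xi(g)_{\sigma_p}$, is the right computation. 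Two small remarks: you do not actually need to pass through the intermediate $\mathbb D$ on $\Pol^{(n)}(\bA)$ --- working directly on $A^{|A|^n}$ avoids the question of whether $\Pol^{(n)}(\bA)$ is pp-definable when the signature of $\bA$ is infinite; and in the description of $\ERP$, the operator $\mathrm P$ in \cite{wonderland} allows arbitrary (not only finite) products, though for clones over finite sets this does not affect the equivalence.
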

We refer to~\cite{wonderland} for the definitions involved in Item 4 of the statement; we just mention that $\ERP \mathcal A$ contains the universal-algebraic variety $\HSP \mathcal A$.

Note that Theorem~\ref{Barto} provides an important tool to prove that two elements are distinct in our poset:  if $\bA \npreceq \mathbb B$, then there is a height 1 condition 
which is satisfied in $\mathcal A$ but not in $\mathcal B$. Also note that every operation 
clone on a finite set is the polymorphism clone of a 
finite relational structure. Therefore, the poset
\begin{equation*}
(\{\overline{\mathcal{C}} \mid \mathcal C\text{ is an operation clone on a finite set}\};\preceq_m)
\end{equation*}
is isomorphic to $\Pfin$. In fact, both posets will be called $\Pfin$ and we will use the same symbol $\preceq$ both for the pp-constructability order between structures and for the minor-preserving order between clones.

\subsection{Post's Lattice}\label{postlattice}
The set of clones on the Boolean set $\{0,1\}$ was first investigated by Post~\cite{Post} in $1941$. This set has countably many elements and forms a lattice with respect to the inclusion order. Since we built on this result, we dedicate a section to Post's lattice in order to fix some notation. 
Note that if $\mathcal C \subseteq \mathcal D$, then it follows that $\mathcal C \preceq \mathcal D$ via the identity mapping.

\begin{figure}
	\vspace{-2cm}
	\centering
		\includegraphics[scale=0.7]{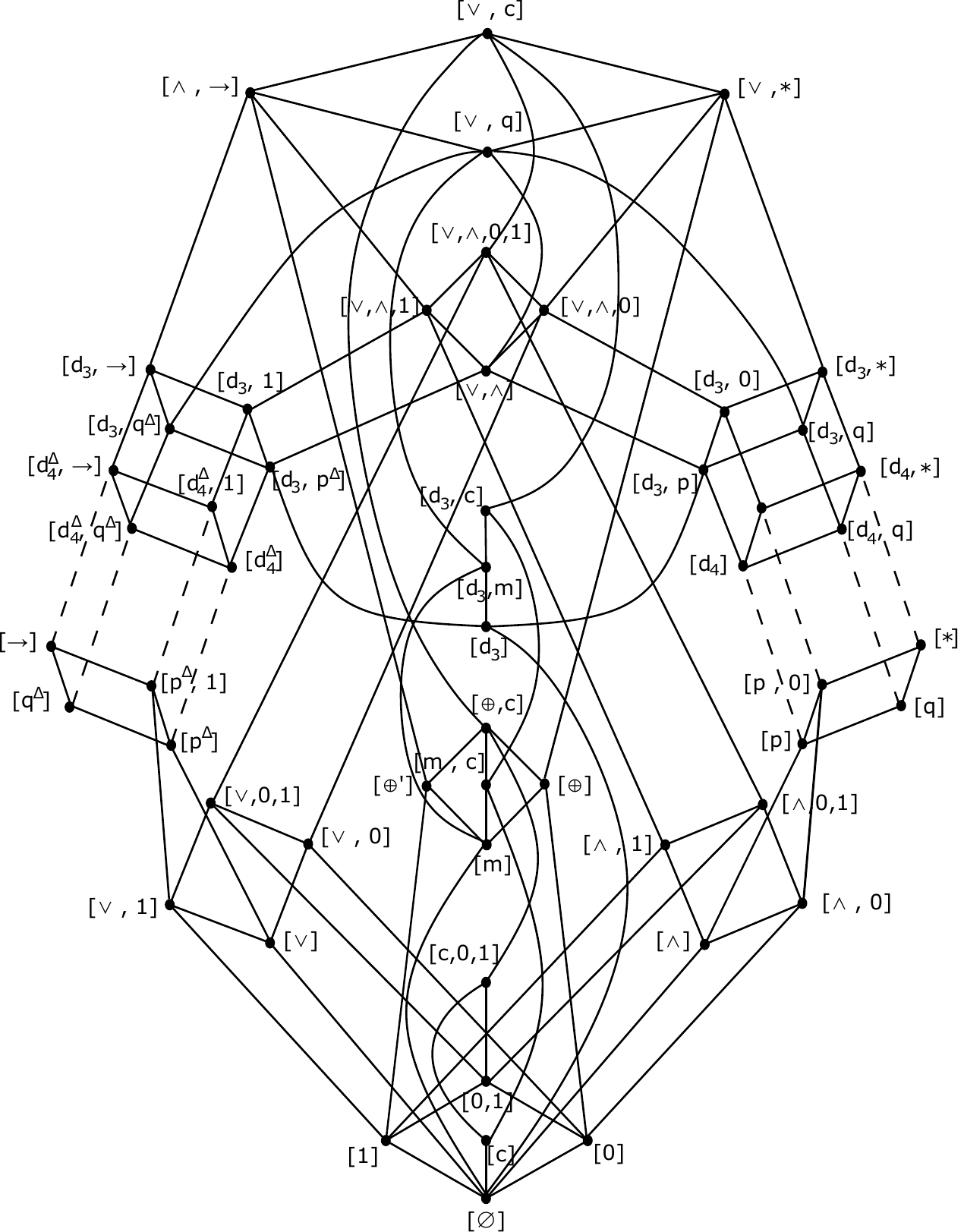}
		\caption{Post's Lattice}
		\label{fig:Post}
\end{figure}
	
We label the clones of Post's lattice by generators: if $f_1,\dots,f_n$ are operations on $\{0,1\}$, then $[f_1,\dots,f_n]$ denotes the clone generated by $f_1,\dots,f_n$. 
As usual, we may apply functions componentwise,
i.e., if $f$ is a $k$-ary map, and $\mathbf{t}_1,\dots,\mathbf{t}_k \in \{0,1\}^m$, then $f(\mathbf{t}_1,\dots,\mathbf{t}_k)$ denotes the $m$-tuple
\begin{equation*}
 (f(t_{1,1},\dots,t_{1,k}),\dots,f(t_{m,1},\dots,t_{m,k})).  
\end{equation*}
 In the description of Post's lattice, we use the following operations. 
\begin{itemize}
	\item $0$ and $1$ denote the two unary constant operations.
	\item $c(x)$ denotes the usual Boolean complementation, i.e., the non-identity permutation on $\{0,1\}$. 
	\item If $f(x_1,\dots,x_n)$ is an $n$-ary operation, then $f^\Delta(x_1,\dots,x_n)$ denotes its \emph{dual}, given by $f^\Delta(x_1,\dots,x_n) := c(f(c(x_1),\dots,c(x_n))))$.
	\item $x \oplus y := (x + y)\mod{2}$ and $x \oplus' y := c(x \oplus y)$.
	\item $x \rightarrow y := c(x) \vee y$ and $x \ast y := c(x) \wedge y$.
	\item $d_n(x_1,\dots,x_n) := \bigvee_{i = 1}^n \bigwedge_{j=1 , j\neq i} x_j$. For $n = 3$ we obtain the \emph{majority operation} 
	$d_3(x,y,z) = (x \wedge y) \vee (x \wedge z) \vee (y \wedge z)$.
	\item The \emph{minority operation} $m(x,y,z) := x \oplus y \oplus z$.
	\item $p(x,y,z) := x \wedge (y \vee z)$.
	\item $q(x,y,z) := x \wedge (y \oplus' z) = x \wedge ((y \wedge z) \vee (c(y) \wedge c(z)))$.
\end{itemize}

Post's lattice has 7 atoms, 5 coatoms and it is countably infinite because of the presence of some infinite descending chains; see Figure~\ref{fig:Post}. 

\section{The Lattice $\Pboole$}\label{ourlattice}
We consider the order defined in Section~\ref{h1section} restricted to the class of Boolean clones. Note that this is a coarser order than the usual inclusion order on the class of Boolean clones. 
In this section we are going to describe systematically the poset
\begin{equation*}
\Pboole\coloneqq (\{\overline{\mathcal C}\mid \mathcal C \text{ is a clone on } \{0,1\}\};\preceq).
\end{equation*}
First, we prove that certain Boolean clones are in the same $\equiv$-class. Later, in Section~\ref{Sep} we prove that certain Boolean clones lie in different $\equiv$-classes and, for each separation, we provide a height 1 condition as a witness.
We write 
$\mathcal C \mid \mathcal D$ if $\mathcal C\npreceq\mathcal D$ and $\mathcal D\npreceq\mathcal C$. 
Sometimes it will be useful to refer to relational descriptions of the clones: recall from Theorem~\ref{Barto} that if $\mathcal A = \Pol(\bA)$ and $\mathcal B = \Pol(\mathbb B)$, then $\mathcal A  \preceq \mathcal B$ if and only if $\bA \preceq \mathbb B$.
Finally, in Section~\ref{picture}, we display an order diagram of $\Pboole$.

\subsection{Collapses}\label{collapses}
In this section we prove that certain
clones on $\{0,1\}$ are 
in the same $\equiv$-class, i.e., represent the same element in $\Pboole$. 
We start with the observation that 
 each clone collapses with its dual. 
 
\begin{Prop}
Let $\mathcal C$ and $\mathcal D$ be two Boolean clones such that $\mathcal C\coloneqq [f_1,\dots,f_m]$ and $\mathcal D \coloneqq [f_1^\Delta,\dots,f_m^\Delta]$. Then $\mathcal C\equiv \mathcal D$.
\begin{proof}
To prove that $\mathcal C \preceq\mathcal D$, define $\xi(f):=f^\Delta$
for any $f\in \mathcal C$. Then 
\begin{align*}
  \xi(f_\pi)(x_1,\dots,x_n)& = f_\pi^\Delta(x_1,\dots,x_n) = c(f_\pi(c(x_1,\dots,x_n)) &\\ &= c(f(c(x_{\pi(1),\dots,x_{\pi(n)}}))) = c(f(c(x_{\pi(1)},\dots,x_{\pi(n)}))) &\\ & = f^\Delta(x_{\pi(1)},\dots,x_{\pi(n)}) = \xi(f)(x_{\pi(1)},\dots,x_{\pi(n)}) &\\ & =  \xi(f)_\pi(x_1,\dots,x_n).
\end{align*}
The same argument can be used to prove that $\mathcal D \preceq\mathcal C$.
\end{proof}
\end{Prop}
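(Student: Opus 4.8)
The plan is to exhibit an explicit minor-preserving map from $\mathcal C$ to $\mathcal D$ and then to observe that the whole construction is symmetric. The obvious candidate for a map $\xi\colon\mathcal C\to\mathcal D$ is dualization itself, $\xi(f):=f^\Delta$, where $f^\Delta(x_1,\dots,x_n)=c(f(c(x_1),\dots,c(x_n)))$. Before anything else I would check that $\xi$ really lands in $\mathcal D$: since $c$ is an involution, dualization fixes every projection and commutes with composition, i.e.\ $(g(h_1,\dots,h_k))^\Delta=g^\Delta(h_1^\Delta,\dots,h_k^\Delta)$. Hence, by induction on the term-complexity of an element $f\in\mathcal C=[f_1,\dots,f_m]$, the operation $f^\Delta$ is obtained from the term witnessing $f\in[f_1,\dots,f_m]$ by replacing each occurrence of $f_i$ with $f_i^\Delta$, so that $f^\Delta\in[f_1^\Delta,\dots,f_m^\Delta]=\mathcal D$. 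Arities are obviously preserved.

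Next I would verify the minor-preservation identity $\xi(f_\pi)=\xi(f)_\pi$. For an $n$-ary $f\in\mathcal C$ and $\pi\colon\{1,\dots,n\}\to\{1,\dots,r\}$ this is the short computation already implicit in the statement: $\xi(f_\pi)(x_1,\dots,x_r)=c(f_\pi(c(x_1),\dots,c(x_r)))=c(f(c(x_{\pi(1)}),\dots,c(x_{\pi(n)})))=f^\Delta(x_{\pi(1)},\dots,x_{\pi(n)})=\xi(f)_\pi(x_1,\dots,x_r)$. Thus $\xi$ is a minor-preserving map and $\mathcal C\preceq\mathcal D$.

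Finally I would use symmetry: because $c\circ c=\mathrm{id}$ we have $(f^\Delta)^\Delta=f$ for every Boolean operation $f$, so $\mathcal C$ is generated by the duals of the generators of $\mathcal D$ in exactly the same way; running the argument above with the roles of $\mathcal C$ and $\mathcal D$ exchanged (via $\xi'(g):=g^\Delta$) yields $\mathcal D\preceq\mathcal C$, and therefore $\mathcal C\equiv\mathcal D$. I do not expect any serious obstacle here — the proof is essentially a one-line identity — but the point that deserves a moment's care is the first step, namely that dualization maps the clone $\mathcal C$ \emph{onto} $\mathcal D$ rather than merely into the clone of all Boolean operations; this is immediate once one notes that the permutation $c$ induces a clone automorphism of the full Boolean clone carrying each $f_i$ to $f_i^\Delta$, hence carrying $[f_1,\dots,f_m]$ onto $[f_1^\Delta,\dots,f_m^\Delta]$.
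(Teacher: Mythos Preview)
Your proof is correct and follows essentially the same approach as the paper: both define $\xi(f):=f^\Delta$ and verify the minor-preservation identity via the same short computation, then invoke symmetry for the other direction. You add the (worthwhile) verification that $\xi$ actually lands in $\mathcal D$, which the paper leaves implicit.
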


\begin{Prop}\label{top}
Let $\mathcal C$ be any clone and $\mathcal D$ be a clone with a constant operation. Then $\mathcal C \preceq \mathcal D$.
\begin{proof}
Note that $\mathcal D$ contains a constant operation $g^n$ for every arity $n$. The map $\xi\colon \mathcal C \rightarrow \mathcal D$ that sends every $n$-ary operation to $g^n$ is minor-preserving. 
\end{proof}
\end{Prop}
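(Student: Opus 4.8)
Since the paper uses the same symbol $\preceq$ for pp-constructability between structures and for the existence of a minor-preserving map between clones, the plan is simply to write down an explicit minor-preserving map $\xi\colon\mathcal C\to\mathcal D$; no pp-power has to be constructed by hand, and Theorem~\ref{Barto} is only needed to translate the conclusion back into pp-constructability language.

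First I would upgrade the hypothesis: a clone containing a constant operation with value $a\in D$ contains, for every arity $n\geq 1$, the $n$-ary operation $g^n$ with $g^n(x_1,\dots,x_n)=a$ on all inputs (simply add $n-1$ dummy arguments to the given constant). Define $\xi(f):=g^n$ for every $n$-ary $f\in\mathcal C$; this is arity-preserving and total, since every clone has operations of each arity $n\geq 1$. It remains to check $\xi(f_\pi)=\xi(f)_\pi$ for an arbitrary $n$-ary $f\in\mathcal C$ and an arbitrary $\pi\colon\{1,\dots,n\}\to\{1,\dots,r\}$. Since $f_\pi$ is $r$-ary we have $\xi(f_\pi)=g^r$, while
\[
\xi(f)_\pi(x_1,\dots,x_r)=g^n(x_{\pi(1)},\dots,x_{\pi(n)})=a=g^r(x_1,\dots,x_r),
\]
so the two $r$-ary operations agree. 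Hence $\xi$ is minor-preserving and $\mathcal C\preceq\mathcal D$.

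I do not expect a genuine obstacle: the argument rests entirely on the fact that a constant operation is absorbed by the minor operation $h\mapsto h_\pi$, so that each of the (a priori many) constraints in the definition of a minor-preserving map collapses to the trivial identity $g^r=g^r$. The only point deserving a word of care is that constant operations of \emph{all} arities, not just the single one provided, are needed, and are indeed available because $\mathcal D$ is a clone. As a remark, the proposition shows that every clone is $\preceq\mathcal D$; since moreover any two clones carrying a constant operation are $\preceq$-equivalent, the $\equiv$-class of such a clone is the greatest element of $\Pfin$, hence the top of $\Pboole$.
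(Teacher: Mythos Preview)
Your argument is correct and is exactly the paper's approach: map every $n$-ary operation of $\mathcal C$ to the $n$-ary constant $g^n\in\mathcal D$ and check that this is minor-preserving. You have simply spelled out the two points the paper leaves implicit, namely that $\mathcal D$ contains constants of every arity and that $\xi(f)_\pi=g^r=\xi(f_\pi)$.
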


It follows that the top-element in $\Pboole$ is the class of clones that contain a constant operation. The next proposition is about the bottom element. 

\begin{Prop}\label{bottom}
Let $[\emptyset]$ be the set of projections and let $[c]$ be the clone generated by the Boolean negation. Then $[\emptyset] \equiv [c]$.
\begin{proof}
We only have to prove that $[c]\preceq [\emptyset]$. Then the map that fixes the projections and maps $c(\pi_i^{(n)})$ to $\pi_i^{(n)}$ is a minor-preserving from
$[c]$ to $[\emptyset]$.
\end{proof}
\end{Prop}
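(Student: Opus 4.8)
The plan is to use the observation from Section~\ref{postlattice} that $\mathcal C \subseteq \mathcal D$ implies $\mathcal C \preceq \mathcal D$ via the identity map. Since $[\emptyset] \subseteq [c]$, this immediately gives $[\emptyset] \preceq [c]$, so it remains only to exhibit a minor-preserving map $\xi\colon [c] \to [\emptyset]$, which is exactly the reduction already announced in the statement's proof sketch.

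First I would pin down the elements of $[c]$ explicitly. Since $c$ is an involution on $\{0,1\}$, every term built from $c$ and the variables reduces either to a variable $x_i$ or to $c(x_i)$; hence the $n$-ary operations in $[c]$ are exactly the projections $\pi^{(n)}_i$ and the ``negated projections'' $c\circ\pi^{(n)}_i\colon(x_1,\dots,x_n)\mapsto c(x_i)$, for $i\in\{1,\dots,n\}$. These two families are disjoint, because a projection and a negated projection of the same arity disagree on every input (e.g.\ on the all-zero tuple). Consequently the map $\xi\colon[c]\to[\emptyset]$ defined by $\xi(\pi^{(n)}_i):=\pi^{(n)}_i$ and $\xi(c\circ\pi^{(n)}_i):=\pi^{(n)}_i$ is well defined and preserves arities.

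It then remains to check that $\xi$ is minor-preserving, i.e.\ that $\xi(f_\pi)=\xi(f)_\pi$ for every $f\in[c]$ of arity $n$ and every $\pi\colon\{1,\dots,n\}\to\{1,\dots,r\}$. This is a short case distinction: if $f=\pi^{(n)}_i$ then $f_\pi=\pi^{(r)}_{\pi(i)}$, so both $\xi(f_\pi)$ and $\xi(f)_\pi=(\pi^{(n)}_i)_\pi$ equal $\pi^{(r)}_{\pi(i)}$; if $f=c\circ\pi^{(n)}_i$ then $f_\pi=c\circ\pi^{(r)}_{\pi(i)}$, so $\xi(f_\pi)=\pi^{(r)}_{\pi(i)}=(\pi^{(n)}_i)_\pi=\xi(f)_\pi$. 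Hence $[c]\preceq[\emptyset]$, and together with $[\emptyset]\preceq[c]$ we conclude $[\emptyset]\equiv[c]$.

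There is no genuine obstacle here; the only points deserving a word of care are the explicit description of $[c]$ (so that $\xi$ is defined on all of $[c]$) and the disjointness of projections and negated projections (so that $\xi$ is well defined). Everything else is a routine unwinding of the definition of a minor of an operation.
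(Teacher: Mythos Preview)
Your proof is correct and follows exactly the same approach as the paper: define $\xi$ to fix projections and send each negated projection $c\circ\pi^{(n)}_i$ to $\pi^{(n)}_i$, then observe this is minor-preserving. You simply spell out the details (the explicit description of $[c]$, well-definedness, and the case check) that the paper's one-line proof leaves implicit.
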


Note that with the collapses we have reported so far we can make some observations on the number of atoms in $\Pboole$. We already pointed out that $\overline{[0]}$ and $\overline{[1]}$ are not atoms in $\Pboole$, since $\overline{[0]}=\overline{[1]}$ is the top-element in $\Pboole$. Furthermore, we have that $[\vee] \equiv [\wedge]$ because they are dual to each other. Altogether, we get that $\Pboole$ has at most
three atoms: $\overline{[\wedge]}$, 
$\overline{[m]}$, 
and $\overline{[d_3]}$. We prove in Subsection~\ref{Sep} that these are distinct elements in $\Pboole$. 

Another case of collapse is that the clones $[\vee,\wedge]$ and $[d_3,p]$ represent the same element in $\Pboole$. 
We consider the binary relations
\begin{equation*}
\leq\ \coloneqq \{(0,0),(0,1),(1,1)\} 
\text{ and } B_2 := \{(0,1),(1,0),(1,1)\}
\end{equation*}
and define  (following the notation in~\cite{Polymorphisms}):
\begin{align*}
\mathbb{B}_2 & \coloneqq (\{0,1\};\{0\}, \{1\}, B_2) \\
\mathbb{D}_{\mathrm{STCON}} & \coloneqq (\{0,1\};\{0\}, \{1\}, \leq)
\end{align*}
where $\{0\}$ and $\{1\}$ are unary relations.
It is known that $[\vee,\wedge] = \Pol(\mathbb{D}_{\mathrm{STCON}})$ and $[d_3,p] = \Pol(\mathbb{B}_2, \leq)$ (see, e.g.,~\cite{Schaefer}).

\begin{Prop}
$[\vee,\wedge] \equiv [d_3,p]$.
\begin{proof}
Since $[d_3,p] \subseteq [\vee,\wedge]$ it follows that $[d_3,p]\preceq [\vee,\wedge]$. For the other inequality it suffices to prove that $(\mathbb{B}_2, \leq)$ is homomorphically equivalent to a pp-power of $\mathbb{D}_{\mathrm{STCON}}$. We consider the relational structure $\bA$ with domain $\{0,1\}^2$ and relations defined by
\begin{align*}
\Phi_0(x_1,x_2) &\coloneqq (x_1 = 0) \wedge (x_2 = 1)  \\
\Phi_1(x_1,x_2) &\coloneqq (x_1 = 1) \wedge (x_2 = 0)  \\
\Phi_\leq(x_1,x_2,y_1,y_2) &\coloneqq (x_1 \leq y_1) \wedge (y_2 \leq x_2) \\
\Phi_{B_2}(x_1,x_2,y_1,y_2) &\coloneqq  x_2 \leq y_1 .
\end{align*}
Note that $\bA$ is indeed a pp-power of $\mathbb{D}_{\mathrm{STCON}}$. We define the map 
\begin{equation*}
    f\colon \bA \to (\mathbb{B}_2, \leq)
\end{equation*}
as follows:
\begin{align*}
    f((0,1))\coloneqq 0;~f((0,0))\coloneqq f((1,0))\coloneqq f((1,1))\coloneqq 1.
\end{align*}
Let $g\colon (\mathbb{B}_2, \leq) \to \bA$ be a map such that $g(0)\coloneqq (0,1)$ and $g(1)\coloneqq (1,0)$. It is easy to check that both $f$ and $g$ are homomorphisms. This proves that $\bA$ is homomorphically equivalent to $(\mathbb{B}_2, \leq)$.
\end{proof}
\end{Prop}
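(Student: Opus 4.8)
The plan is to prove the two inequalities $[d_3,p]\preceq[\vee,\wedge]$ and $[\vee,\wedge]\preceq[d_3,p]$ separately. The first is immediate: both generators of $[d_3,p]$ are terms over $\{\vee,\wedge\}$, since $d_3(x,y,z)=(x\wedge y)\vee(x\wedge z)\vee(y\wedge z)$ and $p(x,y,z)=x\wedge(y\vee z)$, so $[d_3,p]\subseteq[\vee,\wedge]$; and by the observation at the beginning of Section~\ref{postlattice}, clone inclusion implies $\preceq$ via the identity map, which is trivially minor-preserving.

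For the reverse inequality I would pass to the relational side using Theorem~\ref{Barto}. Since $[\vee,\wedge]=\Pol(\mathbb{D}_{\mathrm{STCON}})$ and $[d_3,p]=\Pol(\mathbb{B}_2,\leq)$, it suffices to show $\mathbb{D}_{\mathrm{STCON}}\preceq(\mathbb{B}_2,\leq)$, i.e.\ that $(\mathbb{B}_2,\leq)$ is homomorphically equivalent to a pp-power of $\mathbb{D}_{\mathrm{STCON}}$. The guiding idea is the standard ``variable together with its negation'' encoding: represent the Boolean value $1$ by the pair $(1,0)$ and the value $0$ by $(0,1)$, the second coordinate standing for the complemented value. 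Concretely I would take the pp-power $\bA$ of $\mathbb{D}_{\mathrm{STCON}}$ with domain $\{0,1\}^2$ whose relations are: the singletons $\{(0,1)\}$ and $\{(1,0)\}$ for the two unary relations; the relation $(x_1\leq y_1)\wedge(y_2\leq x_2)$ (``coordinatewise, with the second coordinate reversed'') for $\leq$; and the single atomic formula $x_2\leq y_1$ for $B_2$ (which, under the encoding, expresses ``not both $0$''). Each of these is primitive positive over $\mathbb{D}_{\mathrm{STCON}}$, so $\bA$ is genuinely a pp-power.

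It then remains to exhibit homomorphisms both ways. For $g\colon(\mathbb{B}_2,\leq)\to\bA$ I take the encoding itself, $g(0)=(0,1)$ and $g(1)=(1,0)$; for $f\colon\bA\to(\mathbb{B}_2,\leq)$ I collapse every pair other than $(0,1)$ to $1$, i.e.\ $f((0,1))=0$ and $f((0,0))=f((1,0))=f((1,1))=1$. Verifying that $f$ and $g$ are homomorphisms is a finite case check; the only slightly delicate points are that $f$ respects the pp-definition of $\leq$ (the configuration with source coded by a pair $\neq(0,1)$ and target coded by $(0,1)$ is ruled out by the two conjuncts of that definition) and that $f$ respects the pp-definition of $B_2$ (the only potentially bad pair has $(0,1)$ in both arguments, and it is excluded because then $x_2\leq y_1$ would read $1\leq 0$). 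The real content, and the only genuine obstacle, is choosing the four pp-definitions tightly enough that the obvious folding map $f$ is forced to be a homomorphism; everything after that is bookkeeping.
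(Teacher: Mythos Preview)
Your proof is correct and follows essentially the same approach as the paper: the same clone inclusion for the easy direction, the same second pp-power of $\mathbb{D}_{\mathrm{STCON}}$ with the identical four pp-definitions, and the same homomorphisms $f$ and $g$. You add the ``variable together with its negation'' intuition and a few words of verification that the paper omits, but the construction is the paper's construction.
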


\begin{figure}
    \centering
    \begin{tikzpicture}
    \node at (0,0) (0) {0};
    \node at (0,2) (1) {1};
    
    \draw[->] (0) -- (1);
    \draw [->] (0) edge[loop below] (0);
    \draw [->] (1) edge[loop above] (1);
    
\node at (1.5,1) {$\preceq$};

    \node at (3,0) (00) {00};
    \node at (5,0) (01) {01};
    \node at (3,2) (10) {10};
    \node at (5,2) (11) {11};
    \draw [->] (00) edge[loop below] (00);
    \draw [->] (10) edge[loop above] (10);
    \draw [->] (01) edge[loop below] (01);
    \draw [->] (11) edge[loop above] (11);
    \draw[dashed] [->] (10) edge[loop left] (10);
    \draw[dashed] [->] (00) edge[loop left] (00);
    \draw[dashed] [->] (11) edge[loop right] (11);
    \draw [->] (00)--(10);
    \draw [->] (01)--(00);
    \draw [->] (01) edge[bend left](10);
    \draw [->] (01)--(11);
    \draw [->] (11)--(10);
    \draw[dashed] [->] (00)--(11);
    \draw[dashed] [<->] (10) edge[bend left](01);
     \draw[dashed] [<->] (10) edge[bend left](11);
     \draw[dashed] [->] (00) edge[bend right](01);
     \draw[dashed] [<->] (00) edge[bend left](10);
     \draw[dashed] [->] (01) edge[bend right](11);

\draw (0) circle (6.5pt);
\draw (1) circle (6.5pt);
\draw (00) circle (6.5pt);
\draw (01) circle (6.5pt);
\draw (10) circle (6.5pt);
\draw (11) circle (6.5pt);

\node at (6.5,1) {$\equiv$};

\node at (8,0) (a) {0};
\node at (8,2) (b) {1};

\draw (a) circle (6.5pt);
\draw (b) circle (6.5pt);

\draw [->] (a) edge[loop below] (a);
\draw[dashed] [->] (b) edge[loop right] (b);
\draw [->] (b) edge[loop above] (b);
\draw [->] (a) edge[bend left](b);
\draw[dashed] [<->] (a) edge[bend right] (b);

\node at (0,-1) {$\mathbb{D}_{\mathrm{STCON}}$};
\node at (4,-1) {$\bA$};
\node at (8,-1) {$(\mathbb{B}_2, \leq)$};
    \end{tikzpicture}
    \caption{The structure $\mathbb{D}_{\mathrm{STCON}}$ pp-constructs $(\mathbb{B}_2, \leq)$. 
The dashed arrows connect elements in $B_2$; the solid arrows connect elements in $\leq$.}
    \label{fig:stcon}
\end{figure}
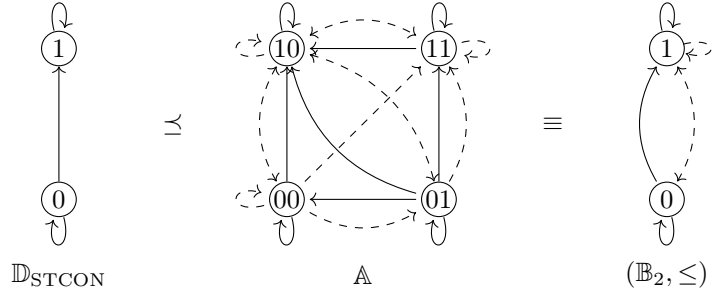

Recall that the \emph{idempotent reduct} of a clone $\mathcal C$ is the clone $\mathcal C^{\mathrm{id}}$ that consists of all idempotent operations in $\mathcal C$.

\begin{Lem}\label{lemmaId}
Let $\mathcal C$ be a Boolean clone with no constant operations. Let $\mathcal D \coloneqq [\mathcal C \cup \{c\}]$ be the clone generated by $\mathcal C$ and the Boolean negation $c$. Then we have $\mathcal D\equiv \mathcal D^{\mathrm{id}}$.
\end{Lem}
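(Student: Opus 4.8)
The plan is to show the two inequalities $\mathcal D^{\mathrm{id}} \preceq \mathcal D$ and $\mathcal D \preceq \mathcal D^{\mathrm{id}}$ separately, the first being trivial since $\mathcal D^{\mathrm{id}} \subseteq \mathcal D$ gives $\mathcal D^{\mathrm{id}} \preceq \mathcal D$ via the identity inclusion. The content is entirely in the reverse direction $\mathcal D \preceq \mathcal D^{\mathrm{id}}$, for which I would construct an explicit minor-preserving map $\xi \colon \mathcal D \to \mathcal D^{\mathrm{id}}$. The key observation is that every operation $f \in \mathcal D$ can be written, using $c$, in a normalized form: since $\mathcal D = [\mathcal C \cup \{c\}]$ and $c$ is an involution, and $\mathcal C$ has no constants so $\mathcal C$ is a clone of idempotent-or-projection operations — more carefully, I would first record that a Boolean clone with no constant operations consists entirely of idempotent operations (because the only non-idempotent behaviour on $\{0,1\}$ that a clone closed under composition can have without generating a constant comes from $c$ itself, which is not in $\mathcal C$ by hypothesis; so $\mathcal C = \mathcal C^{\mathrm{id}}$). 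Hence every $g \in \mathcal C$ is idempotent, and every $f \in \mathcal D$ has the form $f(x_1,\dots,x_n) = c^{\varepsilon_0}\big(g(c^{\varepsilon_1}(x_1),\dots,c^{\varepsilon_n}(x_n))\big)$ for some idempotent $g \in \mathcal C$ and sign pattern $(\varepsilon_0,\dots,\varepsilon_n) \in \{0,1\}^{n+1}$; equivalently $f = g^\Delta$-twisted-by-signs. I would verify this normal form by induction on term complexity, using that $c$ commutes past idempotent operations in the expected way and that composing two such twisted forms again yields such a form (the inner sign flips on the outputs of the inner operations get absorbed into the argument-sign pattern of the outer operation, possibly changing the outer operation to another element of $\mathcal C$, which is fine since $\mathcal C$ is a clone closed under such sign twists when both the outer signs cancel).

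With the normal form in hand, I would define $\xi(f) := g$ where $f = c^{\varepsilon_0}(g(c^{\varepsilon_1}(x_1),\dots,c^{\varepsilon_n}(x_n)))$ — that is, $\xi$ simply strips all the negations. The first thing to check is that this is well-defined: two different normal forms for the same $f$ must yield the same (or at least $\equiv_m$-compatibly the same) $g$. A cleaner route to well-definedness is to give an intrinsic description of $\xi(f)$: for a Boolean operation $f$, let $\xi(f)(x_1,\dots,x_n) := f\big(x_1 \oplus f(0,\dots,0)\text{-type corrections}\big)$ — concretely, set $a_i \in \{0,1\}$ so that flipping the $i$-th input sign turns $f$ into an idempotent operation, and flipping the output if needed; but the slickest version is to note that for any $f \in \mathcal D$, the operation $\hat f(x_1,\dots,x_n) := f(x_1,\dots,x_n) \oplus f(0,\dots,0) \oplus (x_1 \oplus \cdots)$ — I will instead just fix the output sign by $\varepsilon_0$ so that $\xi(f)(0,\dots,0) = 0$ and fix each input sign by $\varepsilon_i$ so that $\xi(f)$ is idempotent, which is forced once we know $\xi(f) \in \mathcal C = \mathcal C^{\mathrm{id}}$, pinning down the normal form uniquely. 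This makes $\xi$ a genuine function $\mathcal D \to \mathcal D^{\mathrm{id}}$, arity-preserving by construction.

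The remaining step is to check that $\xi$ is minor-preserving, i.e., $\xi(f_\pi) = \xi(f)_\pi$ for every $\pi\colon\{1,\dots,n\}\to\{1,\dots,r\}$. This is the routine-calculation heart of the argument: writing $f$ in normal form with output sign $\varepsilon_0$ and input signs $\varepsilon_1,\dots,\varepsilon_n$, the minor $f_\pi$ has the same output sign $\varepsilon_0$ and input sign $\varepsilon'_j = \varepsilon_{\text{(some preimage of }j)}$ — here one must check consistency: if $\pi(i) = \pi(i') = j$ then identifying $x_i = x_{i'}$ in $f$ is only coherent with the normal form if $\varepsilon_i = \varepsilon_{i'}$, which indeed holds because $f_\pi$ being a well-defined operation and $g = \xi(f)$ being idempotent forces the signs on identified coordinates to agree. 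Then $\xi(f_\pi) = g_\pi = \xi(f)_\pi$ by the definition of minor on $\mathcal C$. I expect the main obstacle to be precisely this coherence-of-signs-under-identification point — making sure the normal form behaves correctly when $\pi$ is non-injective — together with the bookkeeping needed to establish the normal form is preserved under composition in the first place; once those are nailed down, minor-preservation is immediate and the lemma follows.
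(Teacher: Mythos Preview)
Your approach overcomplicates the construction and contains a genuine gap. You try to strip \emph{all} negations (both on the output and on each input) by passing to a normal form $f = c^{\varepsilon_0}\bigl(g(c^{\varepsilon_1}(x_1),\dots,c^{\varepsilon_n}(x_n))\bigr)$ and setting $\xi(f) := g$. Two things go wrong. First, the normal form is not unique (for instance $c(m(x,y,z)) = m(c(x),y,z)$ already gives two different sign patterns for the same operation), so $\xi$ is not obviously well-defined; your ``intrinsic'' fixes do not pin the input signs down either. Second, and more seriously, your key claim in the minor-preservation step is false: you assert that if $\pi(i) = \pi(i')$ then the signs $\varepsilon_i$ and $\varepsilon_{i'}$ must agree, ``because $f_\pi$ being well-defined and $g$ idempotent forces it.'' They need not agree. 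Take $g = m$ and $f(x,y,z) = m(c(x),y,z)$, with $\varepsilon_1 = 1$, $\varepsilon_2 = 0$; identifying the first two arguments is perfectly legal and $f_\pi$ is a well-defined operation. You would then have to produce a \emph{fresh} normal form for $f_\pi$ and argue its core equals $g_\pi$, which your outline does not do. (The side claim that $c \notin \mathcal C$ ``by hypothesis'' is also not part of the stated hypothesis; only absence of constants is assumed.)

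The paper's proof avoids all of this by touching only the \emph{output} sign: define $\xi(f) := f$ if $f$ is idempotent and $\xi(f) := c \circ f$ otherwise. Since $\mathcal D$ has no constants, every $f \in \mathcal D$ satisfies $f(x,\dots,x) \in \{x, c(x)\}$, so $\xi(f)$ is idempotent and lies in $\mathcal D^{\mathrm{id}}$. Minor-preservation is then a one-line check: whether $f$ is idempotent is invariant under taking minors (because $f_\pi(x,\dots,x) = f(x,\dots,x)$), and $(c \circ f)_\pi = c \circ f_\pi$ trivially. No normal forms, no input-sign bookkeeping, and no coherence conditions are needed.
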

\begin{proof}
Since $\mathcal C$ contains no constant operations, for any operation $f$ in $\mathcal C$ either $f(x,\dots,x) \approx x$ holds or $f(x,\dots,x) \approx c(x)$ holds. We claim that there exists a minor-preserving map $\xi: \mathcal D \rightarrow \mathcal D^{\mathrm{id}}$. We define $\xi \colon \mathcal D \to \mathcal D^{\mathrm{id}}$ as follows: for an $n$-ary operation $f \in \mathcal D$ 
\begin{equation*}
\xi(f)(x_1,\dots,x_n)\coloneqq
   \begin{cases}
	 \ f(x_1,\dots,x_n) & \mbox{if } f \mbox{ is idempotent}
	\\ \ c(f(x_1,\dots,x_n)) & \mbox{otherwise.}
	 \end{cases}
\end{equation*}
By definition $\xi(f) \in \mathcal D^{\mathrm{id}}$. We claim that $\xi$ is minor preserving: 
if $f$ is idempotent, then $\xi$ is the identity, and the claim trivially holds; in the other case, the claim follows by the definition of negation:
\begin{align*}
\xi(f_\pi)(x_1,\dots,x_n)&\approx (c\ \circ f_\pi)(x_1,\dots,x_n) \approx (c\ \circ f)(x_{\pi(1)},\dots,x_{\pi(n)})\\ & \approx \xi(f)(x_{\pi(1)},\dots,x_{\pi(n)}) \approx \xi(f)_\pi(x_1,\dots,x_n).\qedhere
\end{align*}
\end{proof}

\begin{Cor}
$[d_3,c] \equiv[d_3,m]$ and $[m,c] \equiv[m]$.
\end{Cor}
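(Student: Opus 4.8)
The plan is to invoke Lemma~\ref{lemmaId} twice, once with $\mathcal C = [d_3]$ and once with $\mathcal C = [m]$. In both cases the generator is idempotent, so the generated clone consists of idempotent operations only and in particular contains no constant operation; thus the hypothesis of Lemma~\ref{lemmaId} is satisfied, and with $\mathcal D := [\mathcal C \cup \{c\}]$ we obtain $[d_3,c] \equiv [d_3,c]^{\mathrm{id}}$ and $[m,c] \equiv [m,c]^{\mathrm{id}}$. It therefore suffices to prove the two clone identities $[m,c]^{\mathrm{id}} = [m]$ and $[d_3,c]^{\mathrm{id}} = [d_3,m]$.

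The first identity is straightforward. Since $m$ and $c$ are affine, $[m,c]$ is contained in the clone $L$ of all affine Boolean operations; and an affine operation $a_0 \oplus a_1 x_1 \oplus \dots \oplus a_n x_n$ is idempotent exactly when $a_0 = 0$ and $a_1 \oplus \dots \oplus a_n = 1$, i.e. when it has the form $\bigoplus_{i \in S} x_i$ with $|S|$ odd. All such operations lie in $[m]$ (iterate $m$), so $[m,c]^{\mathrm{id}} \subseteq L^{\mathrm{id}} = [m]$; the reverse inclusion is clear since $[m]$ is idempotent and $[m] \subseteq [m,c]$. Hence $[m,c]^{\mathrm{id}} = [m]$, and together with Lemma~\ref{lemmaId} this gives $[m,c] \equiv [m]$.

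For the second identity, the inclusion $[d_3,m] \subseteq [d_3,c]^{\mathrm{id}}$ follows from the identity
\[
m(x,y,z) = d_3\big( d_3(c(x),y,z),\, d_3(x,c(y),z),\, d_3(x,y,c(z)) \big),
\]
which is checked on the eight Boolean triples and shows that $m \in [d_3,c]$. For the converse, note that $[d_3,c]$ consists of self-dual operations, being generated by self-dual ones, so $[d_3,c]^{\mathrm{id}}$ is contained in the clone of self-dual idempotent operations, which is $[d_3,m]$ (a fact about Post's lattice, cf. Section~\ref{postlattice}). Concretely, one can push every negation in a term over $\{d_3,c\}$ down to the variables using self-duality of $d_3$, observe that an idempotent such term carries no outermost negation, and then eliminate the complemented leaves by repeatedly rewriting $d_3(u,v,c(w))$ as $m(d_3(u,v,w),u,v)$ until a $\{d_3,m\}$-term is reached. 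Thus $[d_3,c]^{\mathrm{id}} = [d_3,m]$, and with Lemma~\ref{lemmaId} we get $[d_3,c] \equiv [d_3,m]$.

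The only delicate point is the converse inclusion $[d_3,c]^{\mathrm{id}} \subseteq [d_3,m]$, i.e. that every idempotent self-dual Boolean operation is a composition of a majority and a minority. If one proves this by the term-rewriting above rather than quoting the structure of Post's lattice, some care is needed, because rewriting $d_3(u,v,c(w))$ as $m(d_3(u,v,w),u,v)$ duplicates the subterms $u$ and $v$; the elimination of complemented leaves must therefore be organised so that it terminates (for instance by first moving all negations onto the variables and then removing them in a suitable order).
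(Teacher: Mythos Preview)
Your proof is correct and follows the same route as the paper: apply Lemma~\ref{lemmaId} with $\mathcal C=[d_3]$ and $\mathcal C=[m]$, then identify the idempotent reducts $[d_3,c]^{\mathrm{id}}=[d_3,m]$ and $[m,c]^{\mathrm{id}}=[m]$. The paper simply cites Post's lattice for these two clone identities, whereas you verify them directly (via the affine description for $[m,c]$ and via self-duality together with the explicit term for $m$ in $\{d_3,c\}$ for $[d_3,c]$); this extra detail is sound, and your cautionary remark about termination of the optional rewriting argument is apt but not needed once you invoke the Post's-lattice fact that the self-dual idempotent clone equals $[d_3,m]$.
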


\begin{proof}
Checking in Post's lattice $\mathfrak P$ we have that that $[d_3,c]^{\mathrm{id}} = [d_3,m]$ and $[m,c]^{\mathrm{id}} = [m]$. The statement follows from Lemma~\ref{lemmaId}.
\end{proof}

\subsection{Separations}\label{Sep}
Recall that if $\mathcal A\npreceq\mathcal B$ then there is a height 1 condition $\Sigma$ which is satisfied by some operations in $\mathcal A$ but by none of the operations in $\mathcal B$. 
In this case we say that $\Sigma$ \emph{is a witness for} $\mathcal A\npreceq\mathcal B$.
We will use the following height 1 conditions. 

\begin{Def}
The following set of height 1 identities
\begin{align*}
t_0(x,y,z)& \approx t_0(x,x,x) \\ 
t_n(x,y,z)& \approx t_n(z,z,z) \\
t_i(x,y,x)& \approx t_i(x,x,x)& 0\leq i\leq n \\ 
t_i(x,x,z)& \approx t_{i+1}(x,x,z)& \mbox{for even } i  \\ 
t_i(x,z,z)& \approx t_{i+1}(x,z,z)& \mbox{for odd } i
\end{align*}
is called \emph{$n$-ary quasi J\'onsson condition}, $\operatorname{QJ}(n)$.
\end{Def}

\begin{Prop}\label{Jonsson}\label{prop:jon}
The quasi J\'onsson condition $\operatorname{QJ}(4)$ is a witness for $[p] \npreceq [\wedge]$.
\end{Prop}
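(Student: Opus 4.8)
The plan is to verify the two conditions that make $\operatorname{QJ}(4)$ a witness for $[p]\npreceq[\wedge]$: namely that $[p]\models\operatorname{QJ}(4)$ and that $[\wedge]\not\models\operatorname{QJ}(4)$. Given these two facts, $[p]\npreceq[\wedge]$ follows at once from Item~3 of Theorem~\ref{Barto}, and $\operatorname{QJ}(4)$ is, by definition, a witness for this non-construction. The substantive half is producing the quasi J\'onsson operations inside $[p]$; the $[\wedge]$-side is a short finite case distinction.

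For $[p]\models\operatorname{QJ}(4)$ I would exhibit the operations explicitly. Since $p$ is idempotent, every member of $[p]$ is idempotent, so the identities $t_0(x,y,z)\approx t_0(x,x,x)$ and $t_4(x,y,z)\approx t_4(z,z,z)$ force $t_0=\pi_1^{(3)}$ and $t_4=\pi_3^{(3)}$. For the inner terms I would take
\[
  t_1 \coloneqq p, \qquad t_2(x,y,z) \coloneqq p(x,z,z) = x\wedge z, \qquad t_3(x,y,z) \coloneqq p(z,x,y) = z\wedge(x\vee y),
\]
all of which lie in $[p]$ (the operations $t_2$ and $t_3$ are obtained from $p$ by identifying, respectively permuting, variables). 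It then remains to check the identities of $\operatorname{QJ}(4)$ one by one; each collapses, after substituting Boolean values, to an absorption or idempotency law of the two-element lattice, e.g.\ $p(x,y,x)=x\wedge(y\vee x)=x$ and $p(x,x,z)=x\wedge(x\vee z)=x$ (which take care of the $t_i(x,y,x)\approx t_i(x,x,x)$ identities and of $t_0(x,x,z)\approx t_1(x,x,z)$), $p(x,z,z)=x\wedge z=t_2(x,z,z)$, $t_2(x,x,z)=x\wedge z=z\wedge(x\vee x)=t_3(x,x,z)$, and $t_3(x,z,z)=z\wedge(x\vee z)=z=t_4(x,z,z)$. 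This is a routine verification.

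For $[\wedge]\not\models\operatorname{QJ}(4)$ I would use that the $n$-ary operations of $[\wedge]$ are precisely the maps $(x_1,\dots,x_n)\mapsto\bigwedge_{i\in S}x_i$ for non-empty $S\subseteq\{1,\dots,n\}$. Assume for contradiction that $t_0,\dots,t_4\in[\wedge]$ witness $\operatorname{QJ}(4)$ and write $t_i=\bigwedge_{j\in S_i}x_j$ with $\emptyset\neq S_i\subseteq\{1,2,3\}$. The identity $t_i(x,y,x)\approx t_i(x,x,x)$ forces $2\notin S_i$ (otherwise the left side genuinely depends on $y$ while the right side is $x$ by idempotency), so $S_i\subseteq\{1,3\}$ for all $i$; then $t_0(x,y,z)\approx t_0(x,x,x)$ forces $S_0=\{1\}$ and $t_4(x,y,z)\approx t_4(z,z,z)$ forces $S_4=\{3\}$. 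Propagating along $t_0(x,x,z)\approx t_1(x,x,z)$, then $t_1(x,z,z)\approx t_2(x,z,z)$, then $t_2(x,x,z)\approx t_3(x,x,z)$ forces in turn $S_1=\{1\}$, $S_2=\{1\}$, $S_3=\{1\}$, i.e.\ $t_1=t_2=t_3=\pi_1^{(3)}$. But then the remaining identity $t_3(x,z,z)\approx t_4(x,z,z)$ reads $x\approx z$, which is false; contradiction. (The same argument in fact shows $[\wedge]\not\models\operatorname{QJ}(n)$ for every $n\ge1$.)

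The main obstacle is entirely in the first half: one must guess quasi J\'onsson operations of chain length $4$ inside the comparatively small clone $[p]$, and the key insight is that $p$ together with two of its minors already does the job. Once the three candidates above are written down, checking the $\operatorname{QJ}(4)$ identities is mechanical lattice bookkeeping, and the $[\wedge]$-direction is a purely combinatorial finite case analysis.
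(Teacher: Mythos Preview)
Your proposal is correct and follows essentially the same approach as the paper: the operations $t_0,\dots,t_4$ you exhibit in $[p]$ are exactly the ones the paper uses (the paper writes $t_0=p(x,x,x)$ and $t_4=p(z,z,z)$, which by idempotency are your projections), and your $[\wedge]$-side argument via the subsets $S_i$ is a slightly more explicit rendering of the paper's ``does not depend on the second/third argument'' propagation. The added parenthetical that the same argument rules out $\operatorname{QJ}(n)$ for all $n\ge 1$ is a correct bonus observation not stated in the paper.
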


\begin{proof}
Define the operations:
\begin{align*}
    t^{[p]}_0(x,y,z) & \coloneqq p(x,x,x)
    & t^{[p]}_1(x,y,z) & \coloneqq p(x,y,z) \\
    t^{[p]}_2(x,y,z) & \coloneqq p(x,z,z)
    & t^{[p]}_3(x,y,z) & \coloneqq p(z,x,y) \\
    t^{[p]}_4(x,y,z) & \coloneqq p(z,z,z); 
\end{align*}
then $t^{[p]}_0,\dots,t^{[p]}_4$ are witnesses for $[p]\models\operatorname{QJ}(4)$.
On the other hand, let us suppose that there are witnesses $t^{[\wedge]}_0,\dots,t^{[\wedge]}_4$ for $[\wedge]\models\operatorname{QJ}(4)$ . Since any operation in $[\wedge]$ is idempotent, we have $t^{[\wedge]}_0(x,y,z) = x$. Moreover, from the identity 
\begin{align*}
    t_i(x,y,x) \approx t_i(x,x,x) && 0\leq i\leq n
\end{align*}
 we have that $t^{[\wedge]}_1(x,y,z)$ does not depend on the second argument. Moreover, $t_0(x,x,z)\approx t_1(x,x,z)$ implies that $t^{[\wedge]}_1(x,y,z)$ does not depend on the third argument. Hence, we conclude that $t^{[\wedge]}_1(x,y,z) = x$. From the identity $t_1(x,z,z)\approx t_2(x,z,z)$ and using $t^{[\wedge]}_1(x,y,z) = x$, we get that $t^{[\wedge]}_2(x,y,z) = x$. Similarly, we obtain also that $t^{[\wedge]}_3(x,y,z) = x$ and $t^{[\wedge]}_4(x,y,z) = x$. This is in contradiction with the identity $t_4(x,y,z)\approx t_4(z,z,z)$. Hence, we conclude that $[\wedge]$ does not satisfy $\operatorname{QJ}(4)$.
\end{proof}

The following structures are useful in the next proposition: 
\begin{align*}
    \mathbb{D}_{\mathrm{2SAT}} & \coloneqq \big (\{0,1\}; R_{00}, R_{01}, R_{10}, R_{11} \big)
    \\ \mathbb{D}_{\mathrm{HORNSAT}} & \coloneqq \big(\{0,1\}; R_{110}, R_{111}, \{0\}, \{1\}\big)
    \\\mathbb{D}_{3\mathrm{LIN}2} & \coloneqq \big (\{0,1\}; \text{all affine subspaces } R_{abcd} \text{ of } \mathbb{Z}_2^3\text{ of dimension } 2 \big)
\end{align*}    
where for all $a,b,c,d \in \{0,1\}$:
\begin{align*}
    R_{ab}&:=\{0,1\}^2\setminus\{(a,b)\}
    \\R_{abc}&:=\{0,1\}^3\setminus\{(a,b,c)\} 
    \\ R_{abcd}&:=\{(x,y,z)\in\mathbb{Z}_2^3 \ :\ ax + by + cz = d \}.
\end{align*}
 These structures are the relational counterparts of the atoms of $\Pboole$ in the sense that $[d_3]=\Pol(\mathbb{D}_{\mathrm{2SAT}})$, $[m]=\Pol(\mathbb{D}_{3\mathrm{LIN}2})$, and $[\wedge]=\Pol(\mathbb{D}_{\mathrm{HORNSAT}})$ (see, for instance,~\cite{Schaefer}).
\begin{Prop}\label{atoms} The following holds:
\begin{enumerate}
    \item $[\wedge]\mid[d_3]$.
    \item $[d_3]\mid[m]$.
    \item $[m]\mid[\wedge]$.
\end{enumerate}
\begin{proof}
(1) By definition, $d_3$ is a quasi majority operation. Let $f$ be any Boolean quasi majority operation. Then it is easy to check that $f$ does not preserve $R_{110}$ and thus $f\notin [\wedge]= \Pol(\mathbb{D}_{\mathrm{HORNSAT}})$. Hence, the quasi majority condition is a witness for  $[d_3]\npreceq[\wedge]$. 

We claim that the height 1 identity $f(x,y) \approx f(y,x)$ is a witness for $[\wedge]\npreceq[d_3]$. This identity is clearly satisfied by $\wedge$. 
Let $f$ be any Boolean binary commutative operation. Then $f$ preserves neither $R_{00}$ nor $R_{11}$. Hence, $f\notin[d_3] = \Pol(\mathbb{D}_{\mathrm{2SAT}})$ and thus the claim is proved.

(2) Let $f$ be a Boolean quasi majority operation. 
Then $f$ does not preserve $R_{1111} = \{(0,0,1), (0,1,0), (1,0,0), (1,1,1)\}$ and thus $f\notin [m] = \Pol(\mathbb{D}_{3\mathrm{LIN}2})$.
Hence, the quasi majority  condition is a witness for $[d_3]\npreceq [m]$. Let $g$ be any Boolean quasi minority operation, then $g$ does not preserve $R_{110}$ and therefore the quasi minority condition is a witness for $[m]\npreceq [d_3]$.

(3) Similar to case (1).
\end{proof}
\end{Prop}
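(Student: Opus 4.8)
The plan is to prove all six non-comparabilities by a single mechanism, via Theorem~\ref{Barto}: to get $\mathcal C \npreceq \mathcal D$ it is enough to produce a height 1 condition that some operations in $\mathcal C$ satisfy and no operation in $\mathcal D$ does. I attach to each atom a \emph{characteristic} height 1 condition that it visibly satisfies: the quasi majority condition $\operatorname{QNU}(3)$ for $[d_3]$ (witness $d_3$), the quasi minority condition for $[m]$ (witness $m$), and the commutativity identity $f(x,y)\approx f(y,x)$ for $[\wedge]$ (witness $\wedge$). Two further facts will carry the whole argument. First, each of $[\wedge]$, $[d_3]$, $[m]$ consists of idempotent operations, since each is generated by an idempotent operation. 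Second, $[\wedge]$, $[\vee]$, $[d_3]$, $[m]$ are four pairwise distinct atoms of Post's lattice, so none of them contains another; in particular $d_3\notin[\wedge]$, $d_3\notin[m]$, $m\notin[d_3]$, $m\notin[\wedge]$, and $\wedge,\vee\notin[d_3]$, $\wedge,\vee\notin[m]$.

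Next I would record three short case checks on $\{0,1\}$. An idempotent ternary Boolean operation $f$ satisfies $\operatorname{QNU}(3)$ only if it equals the majority $d_3$: idempotency fixes $f(0,0,0)=0$ and $f(1,1,1)=1$, and the identities of $\operatorname{QNU}(3)$ then force $f(0,0,1)=f(0,1,0)=f(1,0,0)=0$ and $f(1,1,0)=f(1,0,1)=f(0,1,1)=1$, pinning $f$ down on all eight inputs. By the same reasoning an idempotent ternary Boolean operation satisfying the quasi minority condition must be $m$, and a commutative idempotent binary Boolean operation must be $\wedge$ or $\vee$ (the off-diagonal value $f(0,1)=f(1,0)$ is the only remaining freedom). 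Hence, if one of our three clones $\mathcal D$ satisfies $\operatorname{QNU}(3)$ then $d_3\in\mathcal D$; if it satisfies the quasi minority condition then $m\in\mathcal D$; and if it satisfies commutativity then $\wedge\in\mathcal D$ or $\vee\in\mathcal D$.

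Putting the pieces together gives all six separations uniformly. For (1), $[d_3]\models\operatorname{QNU}(3)$ while $[\wedge]\not\models\operatorname{QNU}(3)$ (otherwise $d_3\in[\wedge]$, impossible for distinct atoms), so $\operatorname{QNU}(3)$ witnesses $[d_3]\npreceq[\wedge]$; and $[\wedge]\models(f(x,y)\approx f(y,x))$ while $[d_3]$ does not (otherwise $\wedge$ or $\vee$ lies in $[d_3]$, again impossible), so this identity witnesses $[\wedge]\npreceq[d_3]$. For (2), $\operatorname{QNU}(3)$ witnesses $[d_3]\npreceq[m]$ (else $d_3\in[m]$), and the quasi minority condition witnesses $[m]\npreceq[d_3]$ (else $m\in[d_3]$). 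For (3), the quasi minority condition witnesses $[m]\npreceq[\wedge]$ (else $m\in[\wedge]$), and commutativity witnesses $[\wedge]\npreceq[m]$ (else $\wedge$ or $\vee$ lies in $[m]$).

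I expect no serious obstacle here; the statement is routine once the characteristic conditions are chosen. The only point needing a little care is the passage through idempotency in the second paragraph: a quasi majority or quasi minority operation that is not idempotent (e.g.\ a constant) is not ruled out by the defining identities alone, so one genuinely uses that $[\wedge]$, $[d_3]$, $[m]$ contain idempotent operations only. Alternatively, one can run the same non-membership checks relationally using $[d_3]=\Pol(\mathbb{D}_{\mathrm{2SAT}})$, $[m]=\Pol(\mathbb{D}_{3\mathrm{LIN}2})$, $[\wedge]=\Pol(\mathbb{D}_{\mathrm{HORNSAT}})$ --- for instance $d_3$ does not preserve $R_{1111}$, and a commutative binary operation can preserve neither $\{R_{00},R_{11}\}$ simultaneously nor $\{R_{1110},R_{1111}\}$ simultaneously --- but this route needs the same idempotency caveat to handle constants, so the argument above is the cleaner one.
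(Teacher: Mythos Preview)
Your argument is correct and uses the same three separating height~1 conditions as the paper ($\operatorname{QNU}(3)$, the quasi minority condition, and commutativity), but you verify that the target clone fails each condition by a different mechanism. The paper argues relationally: it takes an arbitrary operation satisfying the condition and exhibits a concrete relation in the $\Pol$-description of the target clone that it violates (e.g.\ a quasi majority cannot preserve $R_{110}$, a commutative binary operation cannot preserve $R_{00}$ or $R_{11}$). You instead argue clone-theoretically: over $\{0,1\}$ an \emph{idempotent} witness to each condition is unique (namely $d_3$, $m$, or one of $\wedge,\vee$), and since $[\wedge]$, $[\vee]$, $[d_3]$, $[m]$ are distinct atoms of Post's lattice, the required witness cannot lie in a foreign atom. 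Your route is a bit more uniform and avoids any tuple-chasing, at the cost of invoking the atom structure of Post's lattice; the paper's route is more self-contained but needs a separate relational check for each direction. Your remark that idempotency is genuinely needed (constants satisfy all three conditions) is well placed and in fact also underlies the paper's relational checks, where the unary relations $\{0\},\{1\}$ are what rule out constants.
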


\begin{Cor}\label{d3}
$[d_3,p] \npreceq [d_3]$.
\begin{proof}
If $[\wedge] \subseteq [d_3,p]$, then $[d_3,p]$ satisfies the height 1 identity $f(x,y) \approx f(y,x)$. In the proof of Proposition~\ref{atoms} it was shown that $[d_3]$ does not satisfy $f(x,y) \approx f(y,x)$. Hence, $[d_3,p]\npreceq[d_3]$.
\end{proof}
\end{Cor}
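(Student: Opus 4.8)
The plan is to invoke Theorem~\ref{Barto}: to prove $[d_3,p] \npreceq [d_3]$ it suffices to exhibit a height~1 condition that is satisfied by some operations in $[d_3,p]$ but by none in $[d_3]$. The natural candidate is the binary commutativity identity $f(x,y) \approx f(y,x)$, which already appeared as a witness in the proof of Proposition~\ref{atoms}(1).

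First I would check that $[\wedge] \subseteq [d_3,p]$. This is immediate once one notices that $\wedge$ is a minor of $p$: substituting $z \mapsto y$ gives $p(x,y,y) = x \wedge (y \vee y) = x \wedge y$. Hence $\wedge \in [p] \subseteq [d_3,p]$, and in particular $[d_3,p]$ satisfies $f(x,y) \approx f(y,x)$ with $f$ instantiated by $\wedge$.

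Next I would recall from the proof of Proposition~\ref{atoms}(1) that no binary commutative Boolean operation belongs to $[d_3] = \Pol(\mathbb{D}_{\mathrm{2SAT}})$, since any such operation preserves neither $R_{00}$ nor $R_{11}$. Therefore $[d_3]$ does not satisfy $\{f(x,y) \approx f(y,x)\}$, and by Theorem~\ref{Barto} we conclude $[d_3,p] \npreceq [d_3]$.

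There is essentially no obstacle here beyond the bookkeeping: the only computation is the one-line verification that $p(x,y,y) = x \wedge y$, and the separating height~1 condition has already been analysed for $[d_3]$ in Proposition~\ref{atoms}. The one point to be slightly careful about is that we use the commutativity identity rather than the quasi majority or quasi minority conditions employed elsewhere in Proposition~\ref{atoms}, since commutativity is precisely the property that $\wedge$ supplies and that $d_3$ lacks; any other height~1 condition witnessing $[\wedge] \npreceq [d_3]$ would work equally well.
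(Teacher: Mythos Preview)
Your proof is correct and follows essentially the same route as the paper: both use the commutativity identity $f(x,y)\approx f(y,x)$ as the separating height~1 condition, observing that $\wedge\in[d_3,p]$ satisfies it while (by Proposition~\ref{atoms}) $[d_3]$ does not. Your version simply spells out the one-line check $p(x,y,y)=x\wedge y$ that the paper leaves implicit.
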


It is easy to check that $[d_3 , m] = \Pol({\mathbb{C}_2})$, where ${\mathbb{C}_2}$ is the relational structure ${\mathbb{C}_2} \coloneqq (\{0,1\};\{0\},\{1\},\{(0,1), (1,0)\})$.
 
\begin{Prop}\label{Pixley} The following holds:
\begin{enumerate}
    \item $[d_3 , m] \npreceq [d_3]$.
    \item $[d_3 , m] \npreceq [m]$.
    \item $[d_3 , m]\mid [\wedge]$.
\end{enumerate}
\begin{proof}
(1) and (2) follow immediately from Proposition~\ref{atoms}: the quasi minority condition is a witness for $[d_3,m]\npreceq[d_3]$ and the quasi majority condition is a witness for $[d_3,m]\npreceq [m]$. Concerning (3), it follows from Proposition~\ref{atoms} that the quasi majority condition is a witness for $[d_3 , m]\npreceq [\wedge]$.
Conversely, suppose that $g$ is a Boolean binary commutative operation. Then $g$ does not preserve ${\mathbb{C}_2}$. Therefore, the height 1 identity $f(x,y)\approx f(y,x)$ is a witness for $[\wedge]\npreceq [d_3,m]$. 
\end{proof}
\end{Prop}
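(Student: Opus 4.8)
The plan is to separate the relevant pairs of clones by exhibiting, in each case, a height 1 condition that is satisfied by one clone but by none of the operations of the other, and then to invoke Theorem~\ref{Barto}. Almost all of the needed height 1 witnesses have already been produced in Proposition~\ref{atoms}; the only additional ingredient is the relational description $[d_3,m]=\Pol(\mathbb{C}_2)$ stated just before the proposition.

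For Items~(1) and~(2), the starting observation is that both $d_3$ and $m$ belong to $[d_3,m]$. Since $d_3$ is a quasi majority operation and $m$ is a quasi minority operation, the clone $[d_3,m]$ satisfies both the quasi majority condition and the quasi minority condition. On the other hand, Proposition~\ref{atoms} records that the quasi minority condition is a witness for $[m]\npreceq[d_3]$ (hence no operation of $[d_3]$ satisfies it) and that the quasi majority condition is a witness for $[d_3]\npreceq[m]$ (hence no operation of $[m]$ satisfies it). Therefore the quasi minority condition witnesses $[d_3,m]\npreceq[d_3]$, which is Item~(1), and the quasi majority condition witnesses $[d_3,m]\npreceq[m]$, which is Item~(2).

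For Item~(3), the direction $[d_3,m]\npreceq[\wedge]$ is handled in the same way: $[d_3,m]$ satisfies the quasi majority condition via $d_3$, while by Proposition~\ref{atoms} no Boolean quasi majority operation preserves $R_{110}$, so none lies in $[\wedge]=\Pol(\mathbb{D}_{\mathrm{HORNSAT}})$. For the converse direction $[\wedge]\npreceq[d_3,m]$ I would use the binary commutativity identity $f(x,y)\approx f(y,x)$, which holds for $\wedge$, and check that no binary commutative Boolean operation lies in $[d_3,m]=\Pol(\mathbb{C}_2)$: if $g$ were such an operation, then applying $g$ componentwise to the two tuples $(0,1)$ and $(1,0)$ of the relation $\{(0,1),(1,0)\}$ of $\mathbb{C}_2$ would yield $(g(0,1),g(1,0))=(g(0,1),g(0,1))$, a constant pair, which does not belong to $\{(0,1),(1,0)\}$. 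Thus $g$ does not preserve $\mathbb{C}_2$, so $f(x,y)\approx f(y,x)$ is a witness for $[\wedge]\npreceq[d_3,m]$; together with the previous sentence this gives $[d_3,m]\mid[\wedge]$.

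I do not expect any real obstacle here: the whole argument is a bookkeeping exercise in applying Theorem~\ref{Barto} and quoting the height 1 witnesses already established in Proposition~\ref{atoms}. The only genuinely new verification is the last one, namely that a commutative binary operation collapses the two tuples of the edge relation of $\mathbb{C}_2$ to a constant tuple, and this is immediate.
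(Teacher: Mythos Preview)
Your proposal is correct and follows essentially the same approach as the paper: the same height~1 witnesses (quasi minority, quasi majority, binary commutativity) are used for the same separations, and the only new verification---that a commutative binary operation fails to preserve the edge relation of $\mathbb{C}_2$---is carried out exactly as in the paper, only with slightly more detail.
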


We now prove that $\Pboole$ contains an infinite descending chain.
\begin{equation}\label{chain1}\tag{C1}
\overline{[d_3 , q]} \succ \overline{[d_4 , q]} \succ \overline{[d_5 , q]} \succ \dots \succ \overline{[q]}.
\end{equation}

In order to prove this fact, we introduce the following relational structures, also known as \emph{blockers}~\cite{oprsal2018taylors}: 
\begin{align*}
   \mathbb{B}_k \coloneqq ( \{0,1\} ; \{0\}, \{1\}, B_k) & \text{ where }
   B_k \coloneqq \{0,1\}^k \setminus (\underbrace{0,\dots,0}_{k})\\
   \mathbb B_\infty & \coloneqq \bigcup_{n\in\mathbb N} \mathbb B_n.
\end{align*}

Blockers are the relational counterparts of the clones considered in the chain \eqref{chain1}, because the same chain can be rewritten as:
\begin{equation*}
\overline{\Pol(\mathbb{B}_2)} \succ \overline{\Pol(\mathbb{B}_3)} \succ \overline{\Pol(\mathbb{B}_4)} \succ \dots \succ \overline{\Pol(\mathbb{B}_\infty)}.
\end{equation*}
We use the $\operatorname{QNU}$ identities to prove that the order is strict: in fact, $\Pol(\mathbb{B}_{n-1})$ satisfies $\operatorname{QNU}(n)$ but $\Pol(\mathbb{B}_n)$ does not. 
\begin{Prop}\label{QNU} For any natural number $n>2$, the quasi near-unanimity condition $\operatorname{QNU}(n)$ is a witness for 
$\Pol(\mathbb{B}_{n-1})\npreceq\Pol(\mathbb{B}_n)$.
\begin{proof}
Let $f$ be an $n$-ary quasi near-unanimity operation. Suppose for contradiction that $f$ is in $\Pol(\mathbb{B}_n)$. Note that $f$ is idempotent since it has to preserve the unary relations $\{0\}$ and $\{1\}$. In the following $(n\times n)$-matrix every column is an element of $B_n$. Then we get a contradiction since, applying $f$ row-wise, we obtain the missing $n$-tuple $(0,\dots,0)$.
\begin{equation*}
f
\begin{bmatrix} 
0 &\dots & 0 & 1 
\\ \vdots & \reflectbox{$\ddots$}& 1 & 0 
\\ 0 & \reflectbox{$\ddots$} & \reflectbox{$\ddots$} & \vdots 
\\ 1 & 0 & \dots & 0
\end{bmatrix} =
\begin{bmatrix}
0 \\\vdots\\\vdots\\0
\end{bmatrix}
\end{equation*}
Let $g$ be an $n$-ary operation defined as:
\begin{equation*}
    g(x_1,\dots,x_n)\coloneqq 
    \begin{cases}
    1 & \mbox{ if at least two variables are evaluated to 1}.
    \\ 0 & \mbox{ otherwise}.
    \end{cases}
\end{equation*}
Then, by definition, $g$ is a $\operatorname{QNU}$ operation. We claim that $g \in\Pol(\mathbb{B}_{n-1})$. Indeed, from an analysis of any $(n-1)\times n$-matrix $\mathbf M$ such that applying $g$ to the rows of $\mathbf M$ one gets the tuple $(0,\dots,0)$, we conclude that one of the columns of $\mathbf M$ must be equal to a $0$-vector. Thus the claim follows.
\end{proof}
\end{Prop}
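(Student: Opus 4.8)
The plan is to establish directly the two properties that make $\operatorname{QNU}(n)$ a witness for $\Pol(\mathbb{B}_{n-1})\npreceq\Pol(\mathbb{B}_n)$: that $\Pol(\mathbb{B}_{n-1})$ satisfies $\operatorname{QNU}(n)$ while $\Pol(\mathbb{B}_n)$ does not. By Theorem~\ref{Barto} this already gives $\Pol(\mathbb{B}_{n-1})\npreceq\Pol(\mathbb{B}_n)$, so nothing further is needed once the two satisfaction claims are in place. Throughout I would use that every polymorphism of $\mathbb{B}_k$ is idempotent, since it must preserve the unary relations $\{0\}$ and $\{1\}$; combined with the $\operatorname{QNU}$ identities this forces an $n$-ary quasi near-unanimity polymorphism $f$ to return $a$ on every $n$-tuple having $n-1$ coordinates equal to $a$ and one coordinate arbitrary.

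For the negative claim I would argue by contradiction: suppose some $f\in\Pol(\mathbb{B}_n)$ satisfies $\operatorname{QNU}(n)$. Apply $f$ coordinatewise to the $n\times n$ $0/1$-matrix with $1$'s on the anti-diagonal and $0$'s elsewhere. Each of its $n$ columns is a standard unit vector, hence a nonzero element of $B_n$, so the matrix is a legal input; but each of its $n$ rows has exactly one $1$ and $n-1$ zeros, so by the observation above $f$ returns $0$ on every row. Thus $f$ maps these tuples of $B_n$ to $(0,\dots,0)\notin B_n$, contradicting $f\in\Pol(\mathbb{B}_n)$.

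For the positive claim I would exhibit a concrete $n$-ary $\operatorname{QNU}$ operation lying in $\Pol(\mathbb{B}_{n-1})$, namely the operation $g$ that outputs $1$ if and only if at least two of its arguments equal $1$. A short case analysis shows $g$ is idempotent and satisfies $\operatorname{QNU}(n)$; this is exactly where the hypothesis $n>2$ is used, as one needs $n-1\ge 2$ so that a tuple with $n-1$ ones is mapped to $1$. To see $g\in\Pol(\mathbb{B}_{n-1})$ it suffices to verify preservation of $B_{n-1}$: given an $(n-1)\times n$ matrix all of whose $n$ columns lie in $B_{n-1}$, if $g$ mapped every one of the $n-1$ rows to $0$ then each row would contain at most one $1$, giving at most $n-1$ ones in total, whereas the $n$ nonzero columns force at least $n$ ones, a contradiction.

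I expect the positive claim to be the main obstacle: it requires producing, for each $n$, an explicit quasi near-unanimity operation of arity $n$ that is compatible with $B_{n-1}$, and the creative step is guessing such an operation. Once the threshold-$2$ operation $g$ is found, both the $\operatorname{QNU}$ verification and the pigeonhole-style counting argument are routine, and the negative claim is the standard ``feed in the offending matrix'' computation.
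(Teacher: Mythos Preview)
Your proposal is correct and follows essentially the same approach as the paper: the same anti-diagonal matrix for the negative part, and the same threshold-$2$ operation $g$ for the positive part. Your pigeonhole count (at most $n-1$ ones in the matrix versus at least $n$ required) is a cleaner formulation of what the paper leaves as ``from an analysis of any $(n-1)\times n$-matrix $\mathbf M$ \dots\ one of the columns of $\mathbf M$ must be equal to a $0$-vector'', and your remark on where $n>2$ is used is a welcome addition.
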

With the same argument we can prove that there is another infinite descending chain, namely 
\begin{equation}\label{chain2}\tag{C2}
\overline{[d_3 , p]} \succ \overline{[d_4]} \succ \overline{[d_5]} \succ \dots \succ \overline{[p]}.
\end{equation}
Again we consider the relational counterparts of the clones involved in $\eqref{chain2}$ and rewrite the chain as follows:
\begin{equation*}
\overline{\Pol(\mathbb{B}_2, \leq)} \succ \overline{\Pol(\mathbb{B}_3, \leq)} \succ \overline{\Pol(\mathbb{B}_4, \leq)} \succ \dots \succ \overline{\Pol(\mathbb{B}_\infty, \leq)}.
\end{equation*}

\begin{Prop}\label{QNU2}
For any natural number $n>2$, the quasi near-unanimity condition $\operatorname{QNU}(n)$ is a witness for 
$\Pol(\mathbb{B}_{n-1},\leq)\npreceq\Pol(\mathbb{B}_n,\leq)$.
\end{Prop}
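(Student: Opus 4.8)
The plan is to mirror the proof of Proposition~\ref{QNU}, exploiting the fact that $(\mathbb{B}_n,\leq)$ is obtained from $\mathbb{B}_n$ merely by adding the order relation $\leq$. As for every "witness" claim, two things must be verified: that $\operatorname{QNU}(n)$ is satisfied in $\Pol(\mathbb{B}_{n-1},\leq)$, and that it fails in $\Pol(\mathbb{B}_n,\leq)$.

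For the failure half, nothing new is required. Since $\Pol(\mathbb{B}_n,\leq)\subseteq\Pol(\mathbb{B}_n)$, and the first half of the proof of Proposition~\ref{QNU} already shows that no $n$-ary quasi near-unanimity operation lies in $\Pol(\mathbb{B}_n)$ (such an operation is forced to be idempotent by the unary relations $\{0\},\{1\}$, and then the matrix argument produces the forbidden tuple $(0,\dots,0)$), a fortiori there is no such operation in the smaller clone $\Pol(\mathbb{B}_n,\leq)$. Hence $\Pol(\mathbb{B}_n,\leq)\not\models\operatorname{QNU}(n)$.

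For the satisfaction half, I would re-use the very operation $g$ from the proof of Proposition~\ref{QNU}, namely the $n$-ary operation that outputs $1$ exactly when at least two of its arguments equal $1$. It was already observed there that $g$ is a $\operatorname{QNU}$ operation (using $n>2$) and that $g\in\Pol(\mathbb{B}_{n-1})$, so the only additional point is that $g$ preserves $\leq$. Preserving the binary relation $\leq$ is exactly being monotone, and $g$ is plainly monotone: raising any argument from $0$ to $1$ can only increase the number of arguments equal to $1$, hence can only increase the output. Therefore $g\in\Pol(\mathbb{B}_{n-1},\leq)$, and this clone satisfies $\operatorname{QNU}(n)$.

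Combining the two halves gives the proposition. I do not expect any real obstacle here beyond the bookkeeping already carried out for Proposition~\ref{QNU}; the single genuinely new observation — that the threshold-$2$ operation $g$ is monotone and therefore compatible with $\leq$ — is immediate.
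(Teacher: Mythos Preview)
Your proposal is correct and matches the paper's approach exactly: the paper gives no separate proof of Proposition~\ref{QNU2} but simply states that ``the same argument'' as for Proposition~\ref{QNU} works, and your write-up is precisely that argument together with the one extra observation needed---that the threshold-$2$ operation $g$ is monotone and hence preserves $\leq$.
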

The next step is to show that $\eqref{chain1}$ and $\eqref{chain2}$ are two distinct chains in $\Pboole$. In particular, we prove that there is no minor-preserving map from $[q]$ to $[d_3,p]$. The height 1 condition which we use as a witness of this fact is the quasi-version of a celebrated set of identities from universal algebra~\cite{HagemannMitschke}.

\begin{Def}
The following set of height 1 identities
\begin{align*}
	p_0(x,y,z)& \approx p_0(x,x,x)
	\\ p_n(x,y,z)& \approx p_n(z,z,z), \text{ and }
	\\ p_i(x,x,y)& \approx p_{i+1}(x,y,y) \mbox{\ \ for every\ \ } i \leq n
\end{align*}
is called \emph{$n$-ary quasi Hagemann-Mitschke condition}, $\operatorname{HM}(n)$.
\end{Def}

\begin{Prop}\label{HM}
The height 1 condition $\operatorname{HM}(3)$ is a witness for $[q]\npreceq[d_3,p]$.

\begin{proof}
Note that $[q]\models\operatorname{HM}(3)$; defining:
\begin{align*}
	p^{[q]}_0(x,y,z)& \coloneqq q(x,x,x)
	\\ p^{[q]}_1(x,y,z)& \coloneqq q(x,y,z)
	\\ p^{[q]}_2(x,y,z)& \coloneqq q(z,x,y)
	\\ p^{[q]}_3(x,y,z)& \coloneqq q(z,z,z). 
\end{align*}
 
Suppose for contradiction that $\mathcal C\coloneqq\Pol(\mathbb{B}_2, \leq)$
satisfies the height 1 condition $\operatorname{HM}(3)$ via $p^{\mathcal C}_0,\dots,p^{\mathcal C}_3$. Then we have
\begin{equation*}
1 = p^{\mathcal C}_0(1,1,0) = p^{\mathcal C}_1(1,0,0) \leq p^{\mathcal C}_1(1,1,0) = \dots = p^{\mathcal C}_3(1,0,0) = 0
\end{equation*}
which is a contradiction.
\end{proof}
\end{Prop}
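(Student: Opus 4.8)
The plan is to treat the two directions separately: first confirm $[q]\models\operatorname{HM}(3)$, then derive a contradiction from the assumption that $[d_3,p]=\Pol(\mathbb{B}_2,\leq)$ satisfies it. By Theorem~\ref{Barto} and the identity $[d_3,p]=\Pol(\mathbb{B}_2,\leq)$ recorded above, this is exactly what is needed to witness $[q]\npreceq[d_3,p]$.

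For the positive direction I would take the four operations built from $q(x,y,z)=x\wedge(y\oplus' z)$ by permuting and identifying variables, namely $p^{[q]}_0=q(x,x,x)$, $p^{[q]}_1=q(x,y,z)$, $p^{[q]}_2=q(z,x,y)$, $p^{[q]}_3=q(z,z,z)$, and check the identities of $\operatorname{HM}(3)$ directly. The only facts needed are $q(x,x,x)\approx x$ and $q(z,z,z)\approx z$ (since $x\oplus' x\approx 1$), which give the two boundary identities, and a trivial two-case analysis (whether $x=y$ or $x\neq y$) for each middle identity $p^{[q]}_i(x,x,y)\approx p^{[q]}_{i+1}(x,y,y)$; for instance $p^{[q]}_1(x,x,y)=x\wedge(x\oplus' y)$ and $p^{[q]}_2(x,y,y)=y\wedge(x\oplus' y)$ agree because $x\oplus' y$ is $1$ only when $x=y$.

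For the negative direction, suppose $\mathcal C:=\Pol(\mathbb{B}_2,\leq)$ satisfies $\operatorname{HM}(3)$ via $p^{\mathcal C}_0,\dots,p^{\mathcal C}_3$. Since $\mathcal C$ preserves the unary relations $\{0\}$ and $\{1\}$, every $p^{\mathcal C}_i$ is idempotent; together with the boundary identities this forces $p^{\mathcal C}_0$ to be the first projection and $p^{\mathcal C}_3$ the third projection. Now I evaluate the identities $p^{\mathcal C}_i(x,x,y)\approx p^{\mathcal C}_{i+1}(x,y,y)$ at $x=1$, $y=0$, and bridge consecutive terms using preservation of $\leq$: the three pairs $(1,1),(0,1),(0,0)$ all lie in $\leq$, so any polymorphism $p$ of $(\mathbb{B}_2,\leq)$ satisfies $p(1,0,0)\leq p(1,1,0)$. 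Starting from $p^{\mathcal C}_0(1,1,0)=1$ and alternating an $\operatorname{HM}$-identity with this inequality gives $1=p^{\mathcal C}_0(1,1,0)=p^{\mathcal C}_1(1,0,0)\leq p^{\mathcal C}_1(1,1,0)=p^{\mathcal C}_2(1,0,0)\leq p^{\mathcal C}_2(1,1,0)=p^{\mathcal C}_3(1,0,0)=0$, a contradiction.

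The computations are routine, so there is no genuine obstacle; the one idea that makes the argument run is the choice of the evaluation points $(1,1,0)$ and $(1,0,0)$ together with the observation that $\leq$-preservation is precisely what lets one raise the middle argument from $0$ to $1$ without decreasing the output, thereby patching the offset between $p_i(x,x,y)$ and $p_{i+1}(x,y,y)$. It is worth noting that the relation $B_2$ itself is irrelevant beyond forcing idempotence through the unary relations, so the separation really rests on the order $\leq$ alone.
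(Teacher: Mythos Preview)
Your proof is correct and follows essentially the same approach as the paper: the same four minors of $q$ witness $[q]\models\operatorname{HM}(3)$, and the same chain $1=p^{\mathcal C}_0(1,1,0)=p^{\mathcal C}_1(1,0,0)\leq p^{\mathcal C}_1(1,1,0)=\cdots=p^{\mathcal C}_3(1,0,0)=0$ via the $\operatorname{HM}$-identities and $\leq$-preservation yields the contradiction. You have simply spelled out the details (idempotence from the unary relations, the verification of the middle identities, and the pointwise use of $\leq$) that the paper leaves implicit.
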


\begin{Prop}\label{min-order}
 $[m] \npreceq [d_3,p]$.
\end{Prop}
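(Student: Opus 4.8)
The plan is to invoke Theorem~\ref{Barto}: since the polymorphism clones here are $[m]=\Pol(\mathbb{D}_{3\mathrm{LIN}2})$ and $[d_3,p]=\Pol(\mathbb{B}_2,\leq)$, it suffices to exhibit a height 1 condition that is satisfied in $[m]$ but in no operations of $[d_3,p]$; that condition is then automatically a witness for $[m]\npreceq[d_3,p]$. I would use the quasi minority condition. On the $[m]$ side this is immediate: the minority operation $m(x,y,z)=x\oplus y\oplus z$ satisfies $m(x,y,y)\approx m(y,x,y)\approx m(y,y,x)\approx m(x,x,x)\approx x$, so $m$ is a quasi minority operation and hence $[m]$ satisfies the quasi minority condition.

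The substantial part is to show that $[d_3,p]$ has no quasi minority operation. First I would record two structural facts: every operation in $[d_3,p]$ is idempotent (both $d_3$ and $p$ are idempotent, and this is inherited under composition), and every operation in $[d_3,p]$ is monotone with respect to $\leq$ (equivalently, $[d_3,p]=\Pol(\mathbb{B}_2,\leq)$ preserves $\leq$; directly, both generators preserve $\leq$). Now suppose for contradiction that some ternary $f\in[d_3,p]$ witnesses the quasi minority condition. Instantiating $f(x,y,y)\approx f(y,x,y)\approx f(y,y,x)\approx f(x,x,x)$ with $x=1,y=0$ and using idempotency gives $f(0,0,1)=f(1,1,1)=1$; instantiating with $x=0,y=1$ gives $f(0,1,1)=f(0,0,0)=0$. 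Since $(0,0,1)\leq(0,1,1)$ componentwise, monotonicity of $f$ forces $f(0,0,1)\leq f(0,1,1)$, i.e.\ $1\leq 0$, a contradiction. Hence $[d_3,p]$ does not satisfy the quasi minority condition, and by Theorem~\ref{Barto} we conclude $[m]\npreceq[d_3,p]$.

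I do not expect any genuine obstacle: the only point needing (trivial) care is noting that every operation of $[d_3,p]$ is simultaneously idempotent and $\leq$-monotone, after which the contradiction is a one-line instantiation of the quasi minority identities. If a relational phrasing is preferred, the very same computation shows the quasi minority condition fails for $\Pol(\mathbb{B}_2,\leq)$ while holding for $\Pol(\mathbb{D}_{3\mathrm{LIN}2})=[m]$, giving $\mathbb{D}_{3\mathrm{LIN}2}\npreceq(\mathbb{B}_2,\leq)$.
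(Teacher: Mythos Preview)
Your proposal is correct and uses the same witness as the paper, namely the quasi minority condition. The only difference is in the last step: the paper shows that an idempotent quasi minority operation fails to preserve $B_2$ (applying such an $f$ componentwise to $(0,1),(1,0),(1,1)\in B_2$ yields the forbidden tuple $(0,0)$), while you instead show it fails to preserve $\leq$; both are immediate one-line checks against the relational description $[d_3,p]=\Pol(\mathbb{B}_2,\leq)$, so the arguments are essentially the same.
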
 
\begin{proof}
We claim that the quasi minority condition is a witness for $[m]\npreceq [d_3,p]$. In fact, let $f$ be any Boolean quasi minority operation. Then $f$ does not preserve $B_2$ since the missing tuple $(0,0)$ can be obtained by applying $f$ to tuples in $B_2$. Hence, $f \notin [d_3,p] = \Pol(\mathbb{B}_2, \leq)$ and thus the claim follows.
\end{proof} 

\begin{Cor}\label{incchain}
Let $\mathcal C$ be a Boolean clone such that $[p] \subseteq \mathcal C \subseteq [d_3,p]$. Then $\mathcal C\mid [m]$.
\begin{proof}
Let $\mathcal C$ be as in the hypothesis. Let us suppose that $[m] \preceq \mathcal C$. Then we get $[m] \preceq \mathcal C \preceq [d_3,p]$, contradicting Proposition~\ref{min-order}. Let us suppose now that $\mathcal C \preceq [m]$. Then we get $[\wedge] \prec [p]\preceq \mathcal C\preceq [m]$, contradicting Proposition~\ref{atoms}.
\end{proof}
\end{Cor}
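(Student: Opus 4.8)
The plan is to prove the two incomparabilities $[m] \npreceq \mathcal C$ and $\mathcal C \npreceq [m]$ separately, in each case by sandwiching $\mathcal C$ between two clones whose position relative to $[m]$ has already been determined and then appealing to transitivity of $\preceq$ together with the fact (recorded in Section~\ref{postlattice}) that $\mathcal C' \subseteq \mathcal C''$ implies $\mathcal C' \preceq \mathcal C''$.

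For $[m] \npreceq \mathcal C$ I would argue as follows: from $\mathcal C \subseteq [d_3,p]$ we obtain $\mathcal C \preceq [d_3,p]$, so if $[m] \preceq \mathcal C$ held, transitivity would give $[m] \preceq [d_3,p]$, contradicting Proposition~\ref{min-order}. For $\mathcal C \npreceq [m]$ I would use the lower bound $[p] \subseteq \mathcal C$: since $p(x,y,y) = x \wedge (y \vee y) = x \wedge y$, we have $\wedge \in [p]$, hence $[\wedge] \subseteq [p] \subseteq \mathcal C$ and thus $[\wedge] \preceq \mathcal C$; if $\mathcal C \preceq [m]$ held, transitivity would yield $[\wedge] \preceq [m]$, contradicting the incomparability $[m] \mid [\wedge]$ established in Proposition~\ref{atoms}, item (3).

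I do not expect a genuine obstacle here: the corollary is a short deduction from Proposition~\ref{min-order}, Proposition~\ref{atoms}, transitivity of $\preceq$, and the monotonicity of $\preceq$ under clone inclusion. The only points that need a moment of care are matching the directions of the inequalities to the hypothesis — we need $\mathcal C$ to sit below $[d_3,p]$ and above $[p]$ in the inclusion order, which is exactly what is assumed — and recording the trivial containment $[\wedge] \subseteq [p]$ used in the second half.
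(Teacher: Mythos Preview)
Your argument is correct and matches the paper's proof essentially line for line: both directions are obtained by sandwiching $\mathcal C$ between $[p]$ (hence $[\wedge]$) and $[d_3,p]$ and then invoking Proposition~\ref{min-order} and Proposition~\ref{atoms} via transitivity. The only addition you make is spelling out why $[\wedge]\subseteq[p]$, which the paper leaves implicit.
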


\begin{Cor}
$[m] \npreceq [d_3,q]$.
\begin{proof}
Since $[d_3,q]=\Pol(\mathbb{B}_2)$, the  argument is essentially the same as the one of Proposition~\ref{min-order}.
\end{proof}
\end{Cor}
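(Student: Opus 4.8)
The plan is to reuse, essentially verbatim, the strategy of Proposition~\ref{min-order} (and of Proposition~\ref{atoms}(2)): exhibit the quasi minority condition as a witness for $[m]\npreceq[d_3,q]$. First I would note that the minority operation $m(x,y,z)=x\oplus y\oplus z$ is a (quasi) minority operation, so $[m]$ satisfies the quasi minority condition; by Theorem~\ref{Barto} it then suffices to check that $[d_3,q]=\Pol(\mathbb{B}_2)$ contains no Boolean quasi minority operation.

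So let $f$ be an arbitrary Boolean quasi minority operation, i.e., $f(x,y,y)\approx f(y,x,y)\approx f(y,y,x)\approx f(x,x,x)$. If $f$ is not idempotent, then it violates one of the singleton unary relations $\{0\},\{1\}$ of $\mathbb{B}_2$, so $f\notin\Pol(\mathbb{B}_2)$. If $f$ is idempotent, then $f(x,x,x)\approx x$, and the identities above, instantiated with $x\mapsto 0$ and $y\mapsto 1$, give $f(0,1,1)=f(1,0,1)=0$. Since the columns $(0,1),(1,0),(1,1)$ all belong to $B_2$, applying $f$ row-wise to the matrix with these three columns produces the tuple $(f(0,1,1),f(1,0,1))=(0,0)\notin B_2$; hence $f$ does not preserve $B_2$, so again $f\notin\Pol(\mathbb{B}_2)$. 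In either case $f\notin[d_3,q]$.

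Putting this together: the quasi minority condition holds in $[m]$ but not in $[d_3,q]$, so Theorem~\ref{Barto} yields $[m]\npreceq[d_3,q]$. I do not expect a genuine obstacle here — the computation is the same one already carried out for $\Pol(\mathbb{B}_2,\leq)$ in Proposition~\ref{min-order}, the only extra wrinkle being the routine observation that non-idempotent operations are excluded from $\Pol(\mathbb{B}_2)$ by its singleton unary relations, which already lets one read the Corollary off as ``essentially the same argument''.
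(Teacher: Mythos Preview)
Your proposal is correct and follows exactly the approach the paper intends: use the quasi minority condition as the separating height~1 condition and verify that no Boolean quasi minority lies in $\Pol(\mathbb{B}_2)=[d_3,q]$, just as in Proposition~\ref{min-order}. Your explicit case split (non-idempotent operations excluded by the singleton relations, idempotent ones failing on $B_2$ via the columns $(0,1),(1,0),(1,1)$) simply spells out what the paper leaves implicit.
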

\begin{Cor}\label{incompt3}
Let $\mathcal C$ be a Boolean clone such that $[q] \subseteq \mathcal C \subseteq [d_3,q]$. Then $\mathcal C\mid [m]$.
\begin{proof}
The proof is essentially the same as the one of Corollary~\ref{incchain}.
\end{proof}
\end{Cor}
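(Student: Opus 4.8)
The plan is to imitate the proof of Corollary~\ref{incchain} almost verbatim, replacing $[p]$ by $[q]$ and $[d_3,p]$ by $[d_3,q]$. Recall that $\mathcal C \mid [m]$ unfolds into the two statements $[m] \npreceq \mathcal C$ and $\mathcal C \npreceq [m]$, so the argument splits into two cases, in each of which I obtain a contradiction from transitivity of $\preceq$ together with the observation from Section~\ref{postlattice} that $\mathcal D \subseteq \mathcal E$ implies $\mathcal D \preceq \mathcal E$ (via the identity map).

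Suppose first that $[m] \preceq \mathcal C$. Since $\mathcal C \subseteq [d_3,q]$ we have $\mathcal C \preceq [d_3,q]$, hence $[m] \preceq [d_3,q]$ by transitivity, contradicting the corollary just established that $[m] \npreceq [d_3,q]$ (proved exactly as Proposition~\ref{min-order}, using $[d_3,q]=\Pol(\mathbb{B}_2)$ and that a quasi minority operation produces the missing tuple $(0,0)$ from tuples in $B_2$). Suppose instead that $\mathcal C \preceq [m]$. Since $[q] \subseteq \mathcal C$ we have $[q] \preceq \mathcal C \preceq [m]$, and since $[\wedge] \subseteq [q]$ (see below) this gives $[\wedge] \preceq [m]$, contradicting $[m] \mid [\wedge]$ from Proposition~\ref{atoms}(3). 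Both cases being impossible, $\mathcal C \mid [m]$.

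The one point not completely parallel to Corollary~\ref{incchain} is the inclusion $[\wedge] \subseteq [q]$, which here plays the role that $[\wedge] \subseteq [p]$ played there; but this is immediate from the identity $q(x,x,y) = x \wedge (x \oplus' y) = x \wedge y$, so $\wedge \in [q]$. Hence I do not anticipate a real obstacle: once this tiny verification is in place, the statement follows mechanically from transitivity of $\preceq$ and the separations $[m] \npreceq [d_3,q]$ and $[\wedge] \npreceq [m]$ already in hand.
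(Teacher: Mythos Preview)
Your proposal is correct and follows exactly the template the paper intends: the two contradictions are obtained from $[m]\npreceq[d_3,q]$ (the corollary immediately preceding this one) and from $[\wedge]\npreceq[m]$ (Proposition~\ref{atoms}(3)), just as Corollary~\ref{incchain} uses Proposition~\ref{min-order} and Proposition~\ref{atoms}. Your explicit verification that $q(x,x,y)=x\wedge y$, hence $[\wedge]\subseteq[q]$, is a welcome detail the paper leaves implicit (it also follows from $[\wedge]\subseteq[p]\subseteq[q]$ in Post's lattice).
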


\begin{Prop}\label{incompd3}
Let $\mathcal C$ be a Boolean clone such that $[\wedge] \subseteq \mathcal C \subseteq [d_4,q]$. Then $\mathcal C\mid [d_3]$.
\begin{proof}
Since $[\wedge] \subseteq \mathcal C$ and since, by Proposition~\ref{atoms}, we have $[\wedge]\npreceq [d_3]$, it follows that $\mathcal C\npreceq [d_3]$. Since $d_3$ is the Boolean majority operation, we have that $[d_3]$ satisfies the quasi majority condition $\operatorname{QNU}(3)$. By Proposition~\ref{QNU}, we have that $[d_4,q]$ does not satisfy $\operatorname{QNU}(3)$ and hence no subclone of $[d_4,q]$ satisfies $\operatorname{QNU}(3)$. Since $\mathcal C$ is a subclone of $[d_4,q]$ it follows that the quasi majority condition is a witness for $\mathcal C\npreceq [d_3]$.
\end{proof}
\end{Prop}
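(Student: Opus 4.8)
The plan is to establish the two incomparabilities $\mathcal C \npreceq [d_3]$ and $[d_3] \npreceq \mathcal C$ independently, each time bootstrapping off a separation already proved. For $\mathcal C \npreceq [d_3]$ I would use the left inclusion: since $[\wedge] \subseteq \mathcal C$, the identity map witnesses $[\wedge] \preceq \mathcal C$, so a hypothetical $\mathcal C \preceq [d_3]$ would give $[\wedge] \preceq [d_3]$ by transitivity, contradicting Proposition~\ref{atoms}. Concretely, the commutativity identity $f(x,y)\approx f(y,x)$ holds in $\mathcal C$ because $\wedge \in \mathcal C$, and was shown in the proof of Proposition~\ref{atoms} to fail in $[d_3]$, so it serves as an explicit witness for $\mathcal C \npreceq [d_3]$.

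For $[d_3] \npreceq \mathcal C$ I would use the quasi majority condition $\operatorname{QNU}(3)$ as the separating height~1 condition, invoking Theorem~\ref{Barto}. Since $d_3$ is the Boolean majority operation we have $[d_3] \models \operatorname{QNU}(3)$. For the other side I would use the right inclusion $\mathcal C \subseteq [d_4,q]$ together with the blocker description $[d_4,q] = \Pol(\mathbb B_3)$: Proposition~\ref{QNU} with $n=3$ says exactly that $\operatorname{QNU}(3)$ holds in $\Pol(\mathbb B_2)$ but not in $\Pol(\mathbb B_3)$, i.e.\ $[d_4,q] \not\models \operatorname{QNU}(3)$. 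Since satisfying $\operatorname{QNU}(3)$ only asks for a ternary operation of the clone with the near-unanimity property, and passing to the subclone $\mathcal C \subseteq [d_4,q]$ merely removes operations, $\mathcal C$ cannot satisfy $\operatorname{QNU}(3)$ either; hence $\operatorname{QNU}(3)$ witnesses $[d_3] \npreceq \mathcal C$ for every $\mathcal C$ in the stated interval.

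I do not anticipate a genuine obstacle: the statement is essentially a packaging of the separations in Propositions~\ref{atoms} and~\ref{QNU}. The only point worth stating carefully is that failure of a height~1 condition is inherited downward along clone inclusions, which is what lets a single argument cover the whole interval $[\wedge]\subseteq \mathcal C \subseteq [d_4,q]$ rather than one particular clone.
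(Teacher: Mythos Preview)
Your proposal is correct and follows essentially the same route as the paper: use $[\wedge]\npreceq[d_3]$ (via the commutativity identity) together with $[\wedge]\subseteq\mathcal C$ for one direction, and use $\operatorname{QNU}(3)$ together with $\mathcal C\subseteq[d_4,q]=\Pol(\mathbb B_3)$ and Proposition~\ref{QNU} for the other. Your only addition is spelling out explicitly that failure of a height~1 condition passes to subclones, which the paper also uses but states more tersely.
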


\begin{Prop}\label{coatom}
 $[0] \npreceq [m,q]$.
 \begin{proof}
Note that $[m,q] = \Pol(\{0,1\};\{0\},\{1\})$ is the clone of all the idempotent operations on $\{0,1\}$. Hence, $[m,q]$ contains no constants and thus $[m,q]\not\models f(x) \approx f(y)$ while $[0]$ does.
\end{proof}
\end{Prop}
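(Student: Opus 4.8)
The plan is to separate $[0]$ from $[m,q]$ by exhibiting a height 1 condition satisfied by $[0]$ but by no operations of $[m,q]$, and then invoke the equivalence of items 1 and 3 of Theorem~\ref{Barto} (as recorded in the remark after that theorem: a height 1 condition holding in $\mathcal A$ but not in $\mathcal B$ witnesses $\mathcal A\npreceq\mathcal B$).

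First I would identify $[m,q]$ explicitly. Both $m$ and $q$ are idempotent, and idempotency passes to every term operation built from idempotent operations (projections included), so every operation in $[m,q]$ is idempotent; equivalently, $[m,q]=\Pol(\{0,1\};\{0\},\{1\})$, the clone of all idempotent Boolean operations. The only feature I actually need is that $[m,q]$ contains no constant operation — in fact its \emph{unique} unary member is the identity, since an idempotent unary map on $\{0,1\}$ must fix both $0$ and $1$.

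Next I would take $\Sigma$ to be the single height 1 identity $f(x)\approx f(y)$ with $f$ a unary symbol (a legitimate height 1 identity: in the notation of the definition one sets $n=m=1$, $\pi(1)=1$, $\sigma(1)=2$). The clone $[0]$ satisfies $\Sigma$ by interpreting $f$ as the unary constant $0$, for which $f(x)=0=f(y)$. On the other hand $[m,q]$ fails $\Sigma$, because its only unary operation is the identity and the identity does not satisfy $f(0)\approx f(1)$. Hence $\Sigma$ is a witness for $[0]\npreceq[m,q]$.

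There is no real obstacle here; the proof is essentially immediate once $[m,q]$ is recognised as constant-free, and that fact either follows from closure of idempotency under composition or is read off directly from the description $[m,q]=\Pol(\{0,1\};\{0\},\{1\})$ already used elsewhere in the paper.
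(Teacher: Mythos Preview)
Your proposal is correct and follows exactly the paper's own argument: identify $[m,q]$ as the clone of idempotent Boolean operations (equivalently $\Pol(\{0,1\};\{0\},\{1\})$), and use the height~1 identity $f(x)\approx f(y)$, which $[0]$ satisfies via the constant~$0$ but $[m,q]$ cannot since it contains no constants. Your version merely spells out a couple of details (why $f(x)\approx f(y)$ is a legitimate height~1 identity, and why the identity map is the only unary idempotent operation) that the paper leaves implicit.
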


\subsection{The Final Picture}\label{picture}
Putting all the results of the previous section together, we display an order diagram of $\Pboole$. We then use this diagram to revisit 
the complexity of Boolean CSPs. 
In Figure~\ref{fig:finalpicture} we indicate for each element of $\Pboole$ the corresponding complexity class.

\begin{Thm}\label{mainThm}
The pp-constructability poset restricted to the case of Boolean clones is the lattice $\Pboole$ in Figure~\ref{fig:finalpicture}.
\begin{proof}
Recall that every element in $\Pboole$ is a $\equiv$-class; for every $\equiv$-class we list explicitly the clones on $\{0,1\}$ that are in the considered class. The list is justified by the results proved in Section~\ref{collapses}:
    \begin{align*}
        \overline{[\emptyset]} & = \{[\emptyset], [c]\}
        & \overline{[\wedge]} & = \{[\wedge], [\vee]\}
        \\ \overline{[d_3]} & = \{[d_3]\}
        & \overline{[m]} & = \{[m], [m,c]\}
        \\ \overline{[d_3, m]} & = \{[d_3, m], [d_3,c]\}
        & \overline{[p]} & = \{[p], [p^\Delta]\}
        \\ \overline{[q]} & = \{[q], [q^\Delta]\}
        & \overline{[d_3, p]} & = \{[d_3, p], [d_3, p^\Delta], [\wedge,\vee]\}
        \\ \overline{[d_i, p]} & = \{[d_i,p], [d_i,p^\Delta] \mid i > 3\}
        & \overline{[d_i, q]} & = \{[d_i,q], [d_i,q^\Delta] \mid i \geq 3 \}
        \\ \overline{[m, q]} & = \{[\vee, q]\}
        & \overline{[0]} & = \{\mathcal C\mid 0 \in  \mathcal C\text{ or }  1 \in  \mathcal C\}
    \end{align*}

Note that all the clones of Post's lattice appear in this list. We have to show that there are no further collapses. Recall that if $\mathcal C$ and $\mathcal D$ are elements of Post's lattice such that $\mathcal C \subseteq \mathcal D$, then $\mathcal C\preceq\mathcal D$. Using this remark together with the results proved in Section~\ref{ourlattice} we get the following inequalities.
\begin{align*}
    [\emptyset] \preceq \mathcal C \preceq [m,q] \prec [0], \text{ for every } \mathcal C \neq [0] && \text{(Propositions~\ref{bottom}, \ref{coatom}, \ref{top})}
    \\ [d_3] \prec [d_3,m]\ \text{   and   }\ [m] \prec [d_3,m] && (\text{Proposition } \ref{Pixley})
    \\ [\wedge] \prec [p] \prec [q] && \text{(Propositions \ref{prop:jon},\ \ref{HM})}
    \\ [d_3] \prec [d_3,p] && (\text{Corollary } \ref{d3})
    \\ [p] \prec [d_{i+1},p] \prec [d_i,p], \text{ for every } i \geq 3 && (\text{Proposition } \ref{QNU2})
    \\ [q] \prec [d_{i+1},q] \prec [d_i,q], \text{ for every } i \geq 3 && (\text{Proposition } \ref{QNU})
    \\ [d_i,p] \prec [d_i,q], \text{ for every } i \geq 3 && (\text{Proposition } \ref{HM})
\end{align*}

It remains to prove that there are no other comparable elements in $\Pboole$. Propositions~\ref{atoms}, \ref{Pixley}, \ref{incompd3},  Corollary~\ref{incchain}, and Corollary~\ref{incompt3} ensure that this is the case.
\end{proof}
\end{Thm}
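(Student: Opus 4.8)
The plan is to assemble the proof of Theorem~\ref{mainThm} in three stages, corresponding to the three things that must be verified: (i) the listed partition of Post's clones into $\equiv$-classes is correct, i.e.\ every clone in a given box really does collapse to the indicated representative; (ii) all the comparabilities claimed in the Hasse diagram hold; and (iii) no comparability holds that is not forced by the listed ones together with transitivity. Stage (i) is pure bookkeeping against Post's lattice: I would go through the boxes one at a time, invoking the duality collapse (each clone $\equiv$ its dual), Proposition~\ref{bottom} ($[\emptyset]\equiv[c]$), Lemma~\ref{lemmaId} and its corollary (for $[m,c]\equiv[m]$ and $[d_3,c]\equiv[d_3,m]$), the collapse $[\vee,\wedge]\equiv[d_3,p]$, and Proposition~\ref{top} (for the top class $\overline{[0]}$), and then cross-check against Figure~\ref{fig:Post} that no clone of Post's lattice has been omitted from the list. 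This is the part already sketched in the excerpt's proof.

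Stage (ii), the positive comparabilities, is essentially a matter of citing the right earlier result for each covering relation: inclusion of clones gives $\preceq$ for free (the remark that $\mathcal C\subseteq\mathcal D\Rightarrow\mathcal C\preceq\mathcal D$), which handles the ``easy direction'' of every edge, and then Propositions~\ref{bottom},~\ref{coatom},~\ref{top} pin down the bottom and the coatom, Proposition~\ref{Pixley} gives the two edges into $\overline{[d_3,m]}$, Propositions~\ref{prop:jon} and~\ref{HM} give the strictness of $[\wedge]\prec[p]\prec[q]$, Corollary~\ref{d3} gives $[d_3]\prec[d_3,p]$, Propositions~\ref{QNU} and~\ref{QNU2} give strictness along the two infinite chains \eqref{chain1} and \eqref{chain2}, and Proposition~\ref{HM} again gives $[d_i,p]\prec[d_i,q]$. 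I would present these as the displayed list of inequalities exactly as in the excerpt, with each line annotated by its justification.

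Stage (iii) is the real content, and the main obstacle: one must check that every pair of clones \emph{not} connected by a directed path in the diagram is genuinely incomparable (the $\mid$ relation). Here the strategy is that each ``missing edge'' is witnessed by a height~1 condition via Theorem~\ref{Barto}, and the excerpt has been engineered so that a small stock of incomparabilities propagates: Proposition~\ref{atoms} makes the three atoms $\overline{[\wedge]},\overline{[d_3]},\overline{[m]}$ pairwise incomparable; Proposition~\ref{Pixley} handles $\overline{[d_3,m]}$ against the atoms; Corollary~\ref{incchain}, Corollary~\ref{incompt3}, and Proposition~\ref{min-order} isolate the whole $p$-chain and $q$-chain from $\overline{[m]}$ (and from $\overline{[d_3,m]}$); and Proposition~\ref{incompd3} separates the $p$- and $q$-chains (from $[d_4,q]$ down) from $\overline{[d_3]}$. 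The work is to argue that these, closed under the order, account for \emph{all} incomparable pairs: one enumerates the elements of $\Pboole$ (the bottom, the three atoms, $\overline{[d_3,m]}$, the two infinite chains with their common top $\overline{[d_3,p]}$ and their distinct limits $\overline{[p]}$ and $\overline{[q]}$ — wait, note $\overline{[p]}$ is the limit of the $[d_i,p]$ chain and sits below everything in the $q$-chain via $[p]\prec[q]$ — the coatom $\overline{[m,q]}$, and the top), and for each element reads off from the diagram which others are above, below, or incomparable, checking in each incomparable case that the combination of the cited propositions indeed yields $\mid$. The delicate points are the interaction of the two chains near their common upper bound $\overline{[d_3,p]}$ and the placement of $\overline{[d_3]}$ relative to the chains: one must use that $[d_3]\npreceq[d_4,q]$ (from $\operatorname{QNU}(3)$, Proposition~\ref{QNU}) \emph{and} that $[\wedge]\subseteq[d_i,q]$ so $[d_i,q]\npreceq[d_3]$, i.e.\ exactly Proposition~\ref{incompd3}, and symmetrically for the $p$-chain. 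I would organize this final verification as a short case analysis, element by element, pointing to the relevant proposition for each incomparable pair, and conclude that the Hasse diagram of Figure~\ref{fig:finalpicture} is complete and correct.
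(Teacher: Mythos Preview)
Your three-stage plan matches the paper's own proof exactly: the same partition of Post's lattice into $\equiv$-classes via the collapse results, the same list of strict comparabilities justified line by line by the same propositions, and the same appeal to Propositions~\ref{atoms}, \ref{Pixley}, \ref{incompd3} and Corollaries~\ref{incchain}, \ref{incompt3} for the incomparabilities. Your stage~(iii) is actually more explicit than the paper's one-sentence dismissal, and your observation that the incomparabilities with $\overline{[d_3,m]}$ follow by propagating Proposition~\ref{Pixley}(3) and Proposition~\ref{min-order} through the order is exactly the kind of detail the paper leaves to the reader.
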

\begin{figure}
	\vspace{-3cm}
\centering
		\includegraphics[scale=0.4]{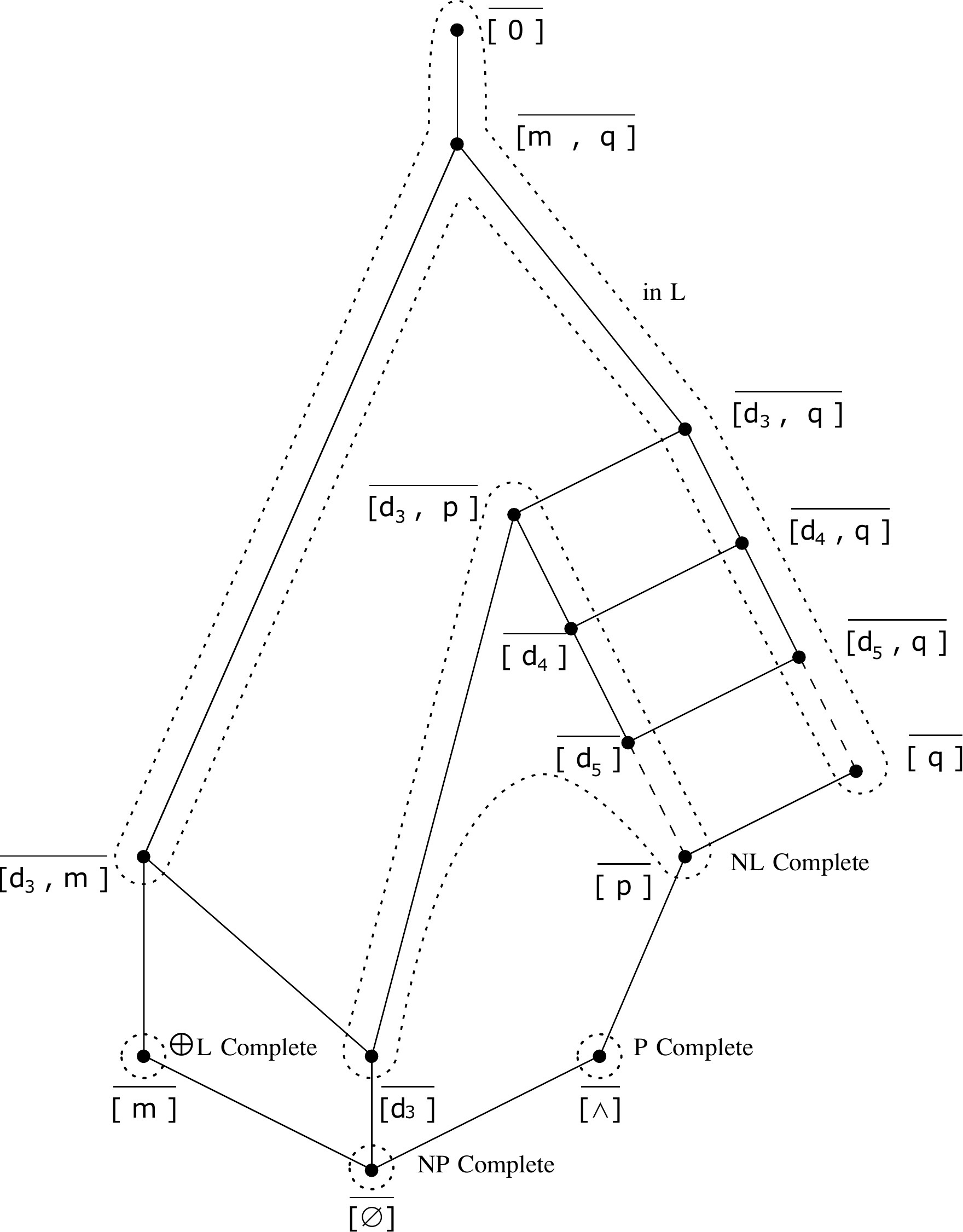}
		\caption{The lattice $\Pboole$}
		\label{fig:finalpicture}
\end{figure}

We already pointed out that the bottom element of $\Pboole$ represents the class of all the Boolean relational structures $\mathbb B$ such that $\operatorname{CSP}(\mathbb B)$ is NP-complete,
and Schaefer's theorem~\cite{Schaefer} implies that the CSP of every other Boolean structure is in P. Following~\cite{AllenderSchaefer}, we describe the complexity of Boolean CSPs within P. Combining Theorem~\ref{mainThm} with the main result in~\cite{AllenderSchaefer} we obtain the following. 

\begin{Thm}
Let $\bA$ be a relational structure with domain $\{0,1\}$ and finite relational signature.
\begin{itemize}
	\item If $\Pol(\bA) \equiv [\emptyset]$, then $\operatorname{CSP}(\bA)$ is NP-complete.
	\item If $\Pol(\bA) \equiv [\wedge]$, then $\operatorname{CSP}(\bA)$ is P-complete.
	\item If $\Pol(\bA) \equiv  [m]$, then $\operatorname{CSP}(\bA)$ is $\oplus$L-complete.
	\item If $\Pol(\bA) \equiv [d_3]$ or $[p] \preceq \Pol(\bA) \preceq [d_3,p]$, then $\operatorname{CSP}(\bA)$ is NL-complete.
	\item If $[d_3,m] \preceq \Pol(\bA)$ or $[q] \preceq \Pol(\bA)$, then $\operatorname{CSP}(\bA)$ is in L.
\end{itemize}
\end{Thm}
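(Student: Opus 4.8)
The plan is to derive the classification by combining the description of $\Pboole$ in Theorem~\ref{mainThm} with the complexity results for Boolean CSPs of Allender, Bauland, Immerman, Schnoor and Vollmer~\cite{AllenderSchaefer} and Schaefer's dichotomy~\cite{Schaefer}. The load-bearing observation is that pp-constructions induce log-space reductions~\cite{wonderland}: whenever $\bA\preceq\mathbb B$, the problem $\operatorname{CSP}(\mathbb B)$ is log-space reducible to $\operatorname{CSP}(\bA)$. Since by Theorem~\ref{Barto} the relation $\bA\preceq\mathbb B$ is equivalent to $\Pol(\bA)\preceq\Pol(\mathbb B)$, and since NP, P, $\oplus$L, NL and L are all closed under log-space reductions, this tells us two things: the complexity of $\operatorname{CSP}(\bA)$, up to log-space equivalence, depends only on the element $\overline{\Pol(\bA)}$ of $\Pboole$; and it is monotone along $\preceq$, in the sense that $\overline{\Pol(\bA)}\preceq\overline{\Pol(\mathbb B)}$ forces $\operatorname{CSP}(\bA)$ to be at least as hard as $\operatorname{CSP}(\mathbb B)$. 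So it suffices to determine the complexity of one representative structure for each $\equiv$-class occurring in the five items and then transport complexities along the Hasse diagram of $\Pboole$.

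First I would collect the representatives and their known complexities. For the bottom class $\overline{[\emptyset]}$ pick any $\bA$ with $\Pol(\bA)=[\emptyset]$ (a structure whose relations lie outside all of Schaefer's tractable classes); then $\operatorname{CSP}(\bA)$ is NP-complete by~\cite{Schaefer}. For $\overline{[\wedge]}$, $\overline{[m]}$, $\overline{[d_3]}$ and $\overline{[d_3,p]}$ use $\mathbb{D}_{\mathrm{HORNSAT}}$, $\mathbb{D}_{3\mathrm{LIN}2}$, $\mathbb{D}_{\mathrm{2SAT}}$ and $\mathbb{D}_{\mathrm{STCON}}$ respectively (recall $[\vee,\wedge]=\Pol(\mathbb{D}_{\mathrm{STCON}})$ and $[\vee,\wedge]\equiv[d_3,p]$); by~\cite{AllenderSchaefer} their CSPs are, in this order, P-complete (Horn satisfiability), $\oplus$L-complete (consistency of linear systems over $\mathbb{Z}_2$), NL-complete ($2$-SAT) and NL-complete (directed reachability). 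Finally, since every clone on a finite set is the polymorphism clone of a finite structure, fix finite-signature structures $\bA_{[p]}$, $\bA_{[q]}$ with $\Pol(\bA_{[p]})=[p]$, $\Pol(\bA_{[q]})=[q]$; by~\cite{AllenderSchaefer} one has $\operatorname{CSP}(\bA_{[p]})\in\mathrm{NL}$, $\operatorname{CSP}(\bA_{[q]})\in\mathrm L$, and also $\operatorname{CSP}(\mathbb{C}_2)\in\mathrm L$, where $[d_3,m]=\Pol(\mathbb{C}_2)$.

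With this in hand the five items follow. The cases $\Pol(\bA)\equiv[\emptyset],[\wedge],[m],[d_3]$ are exactly the representatives just listed. If $[p]\preceq\Pol(\bA)\preceq[d_3,p]$: from $\Pol(\bA)\preceq[d_3,p]\equiv[\vee,\wedge]$ we get a log-space reduction of $\operatorname{CSP}(\mathbb{D}_{\mathrm{STCON}})$ to $\operatorname{CSP}(\bA)$, hence NL-hardness; from $[p]\preceq\Pol(\bA)$ we get a log-space reduction of $\operatorname{CSP}(\bA)$ to $\operatorname{CSP}(\bA_{[p]})\in\mathrm{NL}$, hence membership in NL; so $\operatorname{CSP}(\bA)$ is NL-complete. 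If $[d_3,m]\preceq\Pol(\bA)$ then $\operatorname{CSP}(\bA)$ log-space reduces to $\operatorname{CSP}(\mathbb{C}_2)\in\mathrm L$, and if $[q]\preceq\Pol(\bA)$ then it log-space reduces to $\operatorname{CSP}(\bA_{[q]})\in\mathrm L$; either way $\operatorname{CSP}(\bA)\in\mathrm L$. Two points need care. First, the direction of the reductions: $\bA\preceq\mathbb B$ says $\bA$ is at least as hard as $\mathbb B$, so pp-constructing a \emph{hard} structure \emph{from} $\bA$ witnesses hardness of $\operatorname{CSP}(\bA)$, whereas pp-constructing $\bA$ \emph{from} an \emph{easy} structure witnesses an upper bound. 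Second, one should invoke Theorem~\ref{mainThm} to check that the classes named in the items actually exhaust $\Pboole$ — in particular that the interval $[\overline{[p]},\overline{[d_3,p]}]$ equals $\{\overline{[p]},\overline{[d_4,p]},\overline{[d_5,p]},\dots,\overline{[d_3,p]}\}$ and that the filters above $\overline{[d_3,m]}$ and above $\overline{[q]}$ leave only $\overline{[\emptyset]},\overline{[\wedge]},\overline{[m]}$ and the classes covered by item~4 (namely $\overline{[d_3]}$ together with that interval) uncovered. The only genuinely external ingredient, beyond the classical hardness of $3$-SAT, Horn-SAT, $\mathbb{Z}_2$-linear systems and $2$-SAT/reachability, are the membership statements $\operatorname{CSP}(\bA_{[p]})\in\mathrm{NL}$, $\operatorname{CSP}(\bA_{[q]})\in\mathrm L$ and $\operatorname{CSP}(\mathbb{C}_2)\in\mathrm L$; these cannot be obtained by transporting complexity along $\Pboole$ and are precisely what~\cite{AllenderSchaefer} supplies. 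I expect this last matching of the \cite{AllenderSchaefer} upper bounds to the right elements of $\Pboole$ to be the only real work; everything else is bookkeeping against the lattice.
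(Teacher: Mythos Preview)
Your proposal is correct and is exactly the approach the paper intends: the paper does not give a separate proof but simply states that the theorem is obtained ``combining Theorem~\ref{mainThm} with the main result in~\cite{AllenderSchaefer}'', and your argument is a faithful unpacking of that sentence (log-space reductions along $\preceq$, representative structures for each $\equiv$-class, and the exhaustiveness check against the lattice of Theorem~\ref{mainThm}). Your verification that the five items partition the elements of $\Pboole$ and your care with the direction of the reductions are precisely the bookkeeping the paper leaves implicit.
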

The same complexity results can be reached using general results collected in the survey article~\cite{Polymorphisms}.

\section{Concluding Remarks and Open Problems}\label{Open problems}
In this article we completely described $\Pboole$, i.e., the pp-constructability poset restricted to structures over a two-element set; equivalently, we studied the poset that arises from ordering Boolean clones with respect to the existence of minor-preserving maps. 

The natural next step is to study the pp-constructability poset on larger classes of finite structures.
Janov and Mu\v{c}nik~\cite{3elem} showed that there are continuum many clones over a three-element set, but all the clones considered 
in their proof have a constant operation, so they only correspond to a single element in our poset.
Uncountably many idempotent clones over a three element set have been described by Zhuk~\cite{Zhuk15}. We have not yet been able to separate them with height 1 conditions. Hence, there is still hope that $\Pfin$ has only countably many elements.
While it is easy to construct infinite antichains in $\Pfin$, we also do not know whether $\Pfin$ contains infinite ascending chains. 

It is easy to see that $\Pfin$ is a meet-semilattice: if $\mathcal C$ and $\mathcal D$ are clones on a finite set, then $\mathcal C \times \mathcal D$ is a clone that projects both to
$\mathcal C$ and to $\mathcal D$ via minor-preserving maps, and all other clones with this property have a minor-preserving map to $\mathcal C \times \mathcal D$. However, we do not know
whether $\Pfin$ is a lattice. It is known that $\mathbb K_3$ pp-constructs (even \emph{pp-interprets}; see, e.g.,~\cite{Bodirsky-HDR}) all finite structures. Hence, $\overline{\mathbb K_3}$ is the bottom element in $\Pfin$. 
Moreover, it can be shown that $\Pfin$ has no atoms and that $\overline{[m,q]}$ is the only coatom in $\Pfin$. Note that for every finite set A with at least two elements, the class $\overline{[m,q]}$
contains the clone of all idempotent operations on $A$. One can see that when studying $\Pfin$ we can focus on idempotent clones. 
However, the study 
of the entire poset $\Pfin$ is ongoing and will be the topic of a future publication. 

\bibliographystyle{abbrv}
\def\cprime{$'$} \def\cprime{$'$}

\end{document}